\documentclass[hidelinks,onefignum,onetabnum]{siamart250211}
\usepackage{lipsum}
\usepackage{amsfonts}
\usepackage{graphicx}
\usepackage{epstopdf}
\ifpdf
  \DeclareGraphicsExtensions{.eps,.pdf,.png,.jpg}
\else
  \DeclareGraphicsExtensions{.eps}
\fi

\newsiamremark{remark}{Remark}
\newsiamremark{hypothesis}{Hypothesis}
\crefname{hypothesis}{Hypothesis}{Hypotheses}
\newsiamthm{claim}{Claim}
\newsiamremark{fact}{Fact}
\crefname{fact}{Fact}{Facts}

\headers{Structure-Exploiting IMEX-TR for the LB Model}{Chenglong Bao, Kai Deng, Kai Jiang, Juan Zhang}

\title{A Structure-Exploiting Implicit-Explicit Trust Region Method for Computing Second-Order Stationary Points of the Landau-Brazovskii Model\thanks{Submitted to the editors DATE.
\funding{}}}

\author{
Chenglong Bao\thanks{Yau Mathematical Sciences Center, Tsinghua University, Beijing, 100084, China (\email{clbao@mail.tsinghua.edu.cn}).}
\and Kai Deng\thanks{School of Mathematics and Computational Science, Hunan Key Laboratory for Computation and Simulation in Science and Engineering, Key Laboratory of Intelligent Computing and Information Processing of Ministry of Education, Xiangtan University, Xiangtan, Hunan, 411105, China (\email{kdeng1533@smail.xtu.edu.cn, kaijiang@xtu.edu.cn, zhangjuan@xtu.edu.cn}).}
\and Kai Jiang\footnotemark[3]~\thanks{Corresponding author.} \and Juan Zhang\footnotemark[3]
}

\usepackage{amsopn}

\usepackage{bm}%
\usepackage{float}%
\usepackage{braket}%
\usepackage{lipsum}%
\usepackage{amssymb}%
\usepackage{amsmath}%
\usepackage{amsfonts}%
\usepackage{graphicx}%
\usepackage{subfigure}%
\usepackage{epstopdf}%
\usepackage{mathrsfs}%
\usepackage{multicol}%
\usepackage{multirow}%
\usepackage{rotating}
\usepackage{algorithm}%
\usepackage{algorithmicx}%
\usepackage{algpseudocode}%
\usepackage{enumitem}%
\usepackage{xcolor}%
\usepackage{placeins}%
\usepackage{makecell}%

\DeclareMathOperator*{\argmin}{\arg\min}

\ifpdf
\hypersetup{
  pdftitle={Discovering new phases via computing second-order stationary states of Landau-Brazovskii model},
  pdfauthor={Chenglong Bao, Kai Deng, Kai Jiang, Juan Zhang}
}
\fi

\begin{document}

\maketitle

\begin{abstract}
This work focuses on the reliable computation of second-order stationary points in the high-dimensional nonconvex energy landscape of the Landau-Brazovskii (LB) model, a fundamental model for studying phases and phase transitions.
For this purpose, we develop an efficient implicit-explicit trust region (IMEX-TR) method.
Trust region (TR) methods can avoid saddle-point stagnation and guarantee convergence to second-order stationary points under appropriate conditions.
However, their direct application to the LB model has been impractical because the Hessian is dense if treated directly.
The proposed IMEX-TR method overcomes this difficulty by exploiting the Hessian's special structure: the linear interaction part is diagonal in reciprocal space, whereas the nonlinear bulk-energy part is diagonal in physical space.
Based on this structure, we design an efficient solver for the TR subproblem that with globally convergent guarantee and enjoys FFT-based acceleration, with $\mathcal O(N \log N)$ complexity per iteration.
Existing first-order gradient-based methods for the LB model only guarantee convergence to first-order stationary points and may stagnate at saddle points.
In contrast, the proposed IMEX-TR method inherits the theoretical guarantee of converging to second-order stationary points while remaining computationally practical.
Numerical experiments verify the theoretical properties of the algorithm and demonstrate its robustness in locating stable phases from different initial conditions.
Numerical results also show that IMEX-TR can escape unstable stationary states reached by first-order schemes and converge to physically meaningful second-order stationary points.
These results suggest that targeting second-order stationary points provides an effective computational paradigm for exploring complex free-energy landscapes and identifying stable or metastable states.
\end{abstract}

\begin{keywords}
Landau-Brazovskii model; Second-order stationary points; IMEX-TR method; Nonconvex optimization
\end{keywords}

\begin{MSCcodes}
65N22, 65K10, 90C26
\end{MSCcodes}

\section{Introduction}\label{sec:introd}
Computing stable and metastable states of Landau energy functionals is a fundamental task in scientific computing, because such states govern equilibrium phases and phase transitions in ordered materials.
The Landau-Brazovskii (LB) energy functional is a prototypical model for studying weak crystallization and modulated phases, capturing the balance between short-range segregation and long-range frustration in systems from block copolymers to magnetic skyrmion lattices~\cite{brazovskiui1996phase, swift1977hydrodynamic, caplan2017}.
Over the past decades, extensive numerical studies have shown that the LB model possesses a complex, high-dimensional nonconvex free-energy landscape with many competing stationary points, including local minima and saddle points~\cite{mcclenagan2019landau}.
A central numerical difficulty lies in ensuring that the computed stationary states are truly local minima rather than saddle points, thereby benefiting phase diagram construction.

A major obstacle to finding ordered patterns and constructing phase diagram is the problem of getting stuck at saddle points (SDPs) during numerical simulations.
Standard computational approaches, explicit-implicit schemes~\cite{wise2009energy, chen1998applications, shen2010numerical,li2016characterizing}, scalar auxiliary variable schemes~\cite{shen2019new}, the invariant energy quadrature scheme~\cite{yang2016linear}, exponential time differencing~\cite{fu2022energy,hou2023etd} and the Bregman proximal gradient approaches~\cite{bao2024convergence,bao2020adaptive,jiang2020efficient} are designed to lower the free energy at each step.
These approaches are widely used, but they are inherently limited to first-order stationary points, because they terminate as soon as the gradient of the free energy vanishes.
In the high-dimensional, non-convex energy landscape of the LB model, this condition is satisfied not only by stable or metastable but also by many transition states (SDPs with negative eigenvalues of the Hessian).
As a result, a simulation starting near a phase boundary might incorrectly converge to an unstable disordered state or a saddle point, thereby obscuring the stable states that should be used for phase-diagram construction.

To reliably identify (meta-)stable states, one must work with a sharper criterion that distinguishes genuine local minima from saddle points.
This criterion is precisely captured by the notion of second-order stationary points.

\begin{definition}[Second-order stationary points]
For a twice differentiable function $E:\mathbb{R}^n\rightarrow \mathbb{R}$, a point $\bm{v}\in\mathbb{R}^n$ is called a first-order stationary point (SP-I) of $E$ if it satisfies $\nabla E(\bm{v}) = 0$. Moreover, if $\bm{v}$ is an SP-I and satisfies $\nabla^2 E(\bm{v}) \succeq 0$, then $\bm{v}$ is called a second-order stationary point (SP-II) of $E$.
\end{definition}

SP-Is encompass global or local minima, local minima, and saddle points (SDPs), corresponding to stable, metastable and transition states in physics, respectively.
SP-IIs form a subset of SP-Is and represent physically (meta-)stable states.
The identification of SP-IIs enables accurate determination of equilibrium states.
We define SP-IIs using positive semi-definite Hessians, rather than strictly positive definite ones, to accommodate degenerate stationary states with zero eigenvalues in their Hessian matrices.
Such degeneracy arises fundamentally from continuous symmetries, most notably translational invariance in ordered phases including periodic crystals and quasicrystals~\cite{bao2024convergence,cui2025efficient,cui2024spring}. 

Several SDPs search methods have been developed to locate SDPs and transition pathways~\cite{cui2025efficient,yin2019high,henkelman1999dimer,e2011gentlest}. By computing or tracking unstable Hessian eigendirections, these methods can distinguish SDPs from local minima and recover neighboring SP-IIs through downward searches from the saddles. They therefore provide a systematic route to identifying SP-IIs and characterizing the connectivity of the energy landscape. However, repeatedly approximating unstable eigendirections can be computationally expensive for high-dimensional discretizations of the LB model.

Trust region (TR) methods are a classical tool in nonconvex optimization and are well known for exploiting curvature information to avoid saddle point stagnation. Moreover, modern TR theory (e.g., TRACE~\cite{curtis2017trust}) establishes convergence to SP-IIs under appropriate conditions. However, applying such methods to the LB model is challenging because of the high-dimensional discretisation and the dense coupling in the Hessian. In this work, we overcome these challenges by designing a structure-exploiting TR method that makes these TR advantages practically attainable for the LB model.

The main contributions of this paper are as follows.
\begin{itemize}
\item We propose the first efficient implicit-explicit trust region (IMEX-TR) method for computing SP-IIs of the LB model. While saddle avoidance through negative curvature is standard in the TR literature and SP-II convergence follows from modern TR theory under appropriate assumptions, direct application of TR to the LB model is computationally prohibitive.
\item We develop an efficient solver for the nonconvex TR subproblem arising at each iteration. The solver identifies negative-curvature directions and computes the global minimizer. In particular, we exploit the special structure of the LB Hessian: the linear interaction part is diagonal in reciprocal space, whereas the nonlinear bulk-energy part is diagonal in physical space. This structure enables FFT-based acceleration and yields $\mathcal{O}(N \log N)$ complexity per iteration.
\item We establish the convergence theory of the IMEX-TR algorithm by combining the TRACE convergence framework with the global optimality of the inner subproblem solver.
\item Numerical experiments validate the theory and show that IMEX-TR consistently converges to SP-IIs. They also demonstrate clear advantages over first-order gradient-based methods, which may stagnate at saddle points or other unstable stationary states.
\end{itemize}

The remainder of this article is organized as follows. Section~\ref{sec:discretization} discretizes the LB energy functional via the Fourier pseudospectral method, obtaining the finite-dimensional optimization problem. Section~\ref{sec:tr_method} proposes the IMEX-TR method and provides a comprehensive convergence analysis to SP-IIs. This section also details the construction and theoretical analysis of the efficient global solver for the nonconvex TR subproblem. Section~\ref{sec:numerical} presents numerical results that validate the theoretical properties of the proposed method. Finally, Section~\ref{sec:conclusion} provides a summary of the main findings.

\textbf{Notation}: In the rest of the content, let $\|\bm{u}\|_2 = \sqrt{\bm{u}^\top \bm{u}}$ denote the $\ell_2$ norm for vector $\bm{u}$. The operator (spectral) norm of a matrix $\bm{H}$ is denoted by $\|\bm{H}\| := \max\limits_{\|\bm{u}\|_2=1} \|\bm{H}\bm{u}\|_2$. For a third-order tensor $\mathcal{T}$ (e.g., third-order derivative $\nabla^3 E$), the operator norm is defined as $\|\mathcal{T}\| := \max\limits_{\|\bm{u}\|_2 = \|\bm{v}\|_2 = \|\bm{w}\|_2 = 1} |\mathcal{T}[\bm{u},\bm{v},\bm{w}]|$, which represents the maximum absolute value of the tensor evaluated over all unit vectors \cite{cartis2020sharp}.

\section{Discretizing the LB model with Fourier pseudospectral method}\label{sec:discretization}

The energy functional of the LB model is
\begin{equation}\label{eq:lb_energy}
E(\psi(\bm{x})) = \frac{1}{|\Omega|} \int_{\Omega}\underbrace{\frac{1}{2} [(\Delta + 1)\psi(\bm{x})]^2 }_{G(\psi(\bm{x}))} + \underbrace{ (\frac{\tau}{2!} \psi(\bm{x})^2 - \frac{\gamma}{3!} \psi(\bm{x})^3 + \frac{1}{4!} \psi(\bm{x})^4 ) }_{F(\psi(\bm{x}))} d\bm{x}.
\end{equation}
Here, $\Omega$ is a bounded domain with volume $|\Omega|$. The parameter $\tau$ denotes the dimensionless reduced temperature, and $\gamma$ characterizes the strength of the three-body interaction. The order parameter $\psi(\bm{x})$ satisfies the mass conservation constraint 
$$\frac{1}{|\Omega|}\int_{\Omega}\psi(\bm{x})d\bm{x} = 0.
$$

We consider the periodic structures of the LB model. Thus, we adopt the Fourier pseudospectral method for spatial discretization. This approach naturally accommodates periodic boundary conditions and facilitates the enforcement of mass conservation.

Consider a periodic order parameter $\psi(\bm{x})$ defined on $\bm{x}\in\mathbb{T}^d:=\mathbb{R}^d/\bm{A}\mathbb{Z}^d$, where $\bm{A}\in\mathbb{R}^{d\times d}$ represents the primitive Bravais lattice. The primitive reciprocal lattice $\bm{B}\in\mathbb{R}^{d\times d}$ satisfies the dual relationship $\bm{A}\bm{B}^{\top}=2\pi \bm{I}$. The order parameter $\psi(\bm{x})$ can be expanded as
\[
  \psi(\bm{x}) = \sum_{\bm{k}\in\mathbb{Z}^d}\hat{\psi}(\bm{k})e^{i(\bm{B}\bm{k})^{\top}\bm{x} },\quad \bm{x}\in \mathbb{T}^d,
\]
where the Fourier coefficients $\hat{\psi}(\bm{k}) = \frac{1}{|\mathbb{T}^d|}\int_{\Omega}\psi(\bm{x}) e^{-i(\bm{B}\bm{k})^{\top}\bm{x}}\,d\bm{x}$.

Define the discrete grid set as
\[
  \mathbb{T}^d_M = \left\{(\bm{x}_{1,i_1},\dots,\bm{x}_{d,i_d})=\bm{A}(i_1/M,\dots,i_d/M)^{\top},i_j=0,\dots,M-1,j=1,\dots,d\right\},
\]
where the number of elements in $\mathbb{T}^d_M$ is $N=M^d$. Denote the grid periodic function space $\mathcal{F}_M=\{f:\mathbb{T}^d_M\rightarrow \mathbb{R}, f\ \text{is periodic}\}$. For any periodic grid functions $f,g\in\mathcal{F}_M$, define the discrete $\ell^2$-inner product as $\braket{f,g}_N=\frac{1}{N}\sum_{\bm{x}_i\in\mathbb{T}^d_M}f(\bm{x}_i)\bar{g}(\bm{x}_i)$,
where $\bar{g}(\bm{x}_i)$ is the conjugate of $g(\bm{x}_i)$.

The discrete reciprocal space is
\[
  \mathbb{K}_M^d=\{\bm{k}=(k_i)_{i=1}^d\in\mathbb{Z}^d:-M/2\leq k_i<M/2\}.
\]
For $\bm{k}, \bm{h}\in\mathbb{Z}^d$, we have the discrete orthogonality
\[\braket{ e^{i(\bm{B}\bm{k})^{\top}\bm{x}},~ e^{i(\bm{B}\bm{h})^{\top}\bm{x}} }_N
=\begin{cases}1,& \bm{k}=\bm{h}+M\bm{m},~ \bm{m}\in\mathbb{Z}^d,\\
0,& \mbox{otherwise}.
\end{cases}\]
Then the discrete Fourier coefficients of $\psi(\bm{x})$ in $\Omega_M$ can be represented as
\[\hat{\psi}(\bm{k})=\braket{\psi(\bm{x}), e^{i(\bm{B}\bm{k})^{\top}\bm{x}} }_N=\frac{1}{N}\sum_{\bm{x}_i\in\mathbb{T}^d_M}\psi(\bm{x}_i)e^{-i(\bm{B}\bm{k})^{\top}\bm{x}_i},\quad \bm{k}\in \mathbb{K}_M^d.\]

Let $\bm{\Psi}=(\psi_1,\dots,\psi_N)\in\mathbb{R}^{N}$, where $\psi_i=\psi(\bm{x}_i),1\leq i\leq N$. Denote $\mathscr{F}_N\in \mathbb{C}^{N\times N}$ as the discrete Fourier transformation matrix, then we have the discrete order parameter $\hat{\bm{\Psi}}=\mathscr{F}_N \bm{\Psi}$ in reciprocal space. The discretized energy function is 
\[E(\hat{\bm{\Psi}}) = G(\hat{\bm{\Psi}}) + F(\hat{\bm{\Psi}}),\]
where $G$ and $F$ are the discretized interaction and bulk energy
\begin{equation}\label{eq:interaction_bulk_energy}
\begin{aligned}
G(\hat{\bm{\Psi}} ) &= \frac{1}{2}
\sum_{\bm{k}_1+\bm{k}_2=0} \bigl[1 - \bm{k}^{\top}\bm{k}\bigr]^2\hat{\bm{\Psi}} (\bm{k}_1)\hat{\bm{\Psi}}(\bm{k}_2),\\
F(\hat{\bm{\Psi}}) &= \frac{\tau}{2}\sum\limits_{\bm{k}_1+\bm{k}_2=0}\hat{\bm{\Psi}} (\bm{k}_1)\hat{\bm{\Psi}}(\bm{k}_2) - \frac{\gamma}{3!}\sum\limits_{\bm{k}_1+\bm{k}_2+\bm{k}_3=0}\hat{\bm{\Psi}}(\bm{k}_1)\hat{\bm{\Psi}}(\bm{k}_2)\hat{\bm{\Psi}}(\bm{k}_3)\\
&+\frac{1}{4!}\sum\limits_{\bm{k}_1+\bm{k}_2+\bm{k}_3+\bm{k}_4=0}\hat{\bm{\Psi}}(\bm{k}_1)\hat{\bm{\Psi}}(\bm{k}_2)\hat{\bm{\Psi}}(\bm{k}_3)\hat{\bm{\Psi}}(\bm{k}_4),
\end{aligned}
\end{equation}
and $\bm{k}_i\in\mathbb{K}_M^d$. The nonlinear terms $F(\hat{\bm{\Psi}})$ can be efficiently obtained by the FFT after being calculated in the $d$-dimensional physical space. Moreover, the mass conservation constraint is discretized as $\bm{e}_1^{\top}\hat{\bm{\Psi}} = 0$,
where $\bm{e}_1 = (1, 0, \dots , 0)^{\top} \in \mathbb{R}^N$. Therefore, we obtain the finite-dimensional minimization problem
\begin{equation}\label{eq:constrained_ener_one_parameter}
  \min_{\hat{\bm{\Psi}} \in \mathbb{C}^N} E(\hat{\bm{\Psi}}) = G(\hat{\bm{\Psi}} ) + F(\hat{\bm{\Psi}}) \quad s.t.\ \bm{e}_1^{\top}\hat{\bm{\Psi}} = 0.
\end{equation}
According to \eqref{eq:interaction_bulk_energy}, we have
\[
\nabla G(\hat{\bm{\Psi}}) = \bm{\Lambda}\hat{\bm{\Psi}},\quad \nabla F(\hat{\bm{\Psi}}) = \mathscr{F}_N\bm{\Gamma} \mathscr{F}_N^{-1} \hat{\bm{\Psi}},
\]
\[
\nabla^2G(\hat{\bm{\Psi}}) = \bm{\Lambda},\quad \nabla^2F(\hat{\bm{\Psi}}) = \mathscr{F}_N\bm{\Gamma}^{\prime}\mathscr{F}_N^{-1},
\]
where $\bm{\Lambda}$ is a diagonal matrix with non-negative entries $\bigl[1 - \bm{k}^{\top}\bm{k}\bigr]^2$ and $\bm{\Gamma} , \bm{\Gamma}^{\prime}\in \mathbb{R}^{N\times N}$ are also diagonal matrices that depend on $\hat{\bm{\Psi}}$.

\section{Implicit-Explicit Trust Region Method}\label{sec:tr_method}
This section considers solving the minimization problem \eqref{eq:constrained_ener_one_parameter}. To transform the problem into an unconstrained optimization problem, we use the following property
\[
\{\hat{\bm{\Psi}} \in \mathbb{C}^{N} : \bm{e}_1^{\top} \hat{\bm{\Psi}} = 0 \} = \{ \bm{P}^{\top} \bm{v} : \bm{v} \in \mathbb{C}^{N-1} \},
\]
where $\bm{P} = [\bm{0}, \bm{I}_{N-1}]$. Based on this, we introduce an equivalent objective function
\[
\bar{E}(\bm{v}) = E(\bm{P}^{\top}\bm{v}),
\]
we then can equivalently express problem \eqref{eq:constrained_ener_one_parameter} as the following unconstrained optimization problem
\begin{equation}\label{eq:transferd_nonconstrainted_minimization}
\min_{\bm{v} \in \mathbb{C}^{N-1}} \bar{E}(\bm{v}).
\end{equation}

\subsection{IMEX-TR Method and Convergence Analysis}
In this subsection, we present the IMEX-TR method (see Algorithm~\ref{alg:unified_imex_tr}) and prove that this algorithm converges to SP-IIs of the problem \eqref{eq:transferd_nonconstrainted_minimization}. The outer iteration follows the TR framework TRACE proposed in~\cite{curtis2017trust}, for which convergence to SP-IIs has been established under appropriate conditions. Our contributions are twofold: (i) an efficient global solver for the TR subproblem (Section~\ref{subsec:subproblem_solver}) that exploits the Hessian structure; and (ii) a rigorous convergence guarantee (Theorem~\ref{thm:global_optimality}) establishing that the proposed algorithm converges to the global minimizer of TR subproblem, which is important for TRACE to converge to SP-IIs.

At each iteration $j$, we construct a quadratic model $m_j(\bm{d})$ of the objective function around the current iterate $\bm{v}_j$,
\[
m_j(\bm{d}) = \bar{E}(\bm{v}_j) + \bm{g}_j^{\top} \bm{d} + \frac{1}{2} \bm{d}^{\top} \bm{H}_j \bm{d},
\]
where $\bm{g}_j = \nabla \bar{E}(\bm{v}_j)$ and $\bm{H}_j = \nabla^2 \bar{E}(\bm{v}_j)$. The step $\bm{d}_j$ is obtained by solving the TR subproblem
\begin{equation}\label{eq:tr_subproblem_with_subscript}
\min_{\|\bm{d}\|_2 \leq r_j} m_j(\bm{d}),
\end{equation}
where $r_j$ is the trust region radius. The optimality conditions for the TR subproblem \eqref{eq:tr_subproblem_with_subscript} are stated in the following theorem.
\begin{theorem}[{\cite{conn2000trust}, Corollary 7.2.2}]
A vector $\bm{d}_*$ is a global minimizer of the subproblem \eqref{eq:tr_subproblem_with_subscript} if and only if there exists a scalar $\lambda_*\geq 0$ such that the following conditions are satisfied.
\begin{equation}\label{eq:tr_subproblem_optimal_condition}
\begin{cases}
\bm{g}_j + (\bm{H}_j + \lambda_*\bm{I}) \bm{d}_* = 0,\\
\|\bm{d}_*\|_2 - r_j \leq 0,\\
\lambda_*(\|\bm{d}_*\|_2 - r_j) = 0,\\
\bm{H}_j + \lambda_* \bm{I} \succeq 0.
\end{cases}
\end{equation}
If $\bm{H}_j + \lambda_* \bm{I}$ is positive definite, then $\bm{d}_*$ is the unique global minimizer of \eqref{eq:tr_subproblem_with_subscript}.
\end{theorem}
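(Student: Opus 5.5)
We prove both directions of the characterization directly, working with the real quadratic model $m_j(\bm{d}) = \bar{E}(\bm{v}_j) + \bm{g}_j^{\top}\bm{d} + \frac12 \bm{d}^{\top}\bm{H}_j\bm{d}$ on the ball $\{\|\bm{d}\|_2\le r_j\}$. The key tool is a completing-the-square identity. Suppose $\bm{g}_j = -(\bm{H}_j+\lambda_*\bm{I})\bm{d}_*$. Then for any $\bm{d}$,
\begin{equation*}
m_j(\bm{d}) - m_j(\bm{d}_*) = \tfrac12 (\bm{d}-\bm{d}_*)^{\top}(\bm{H}_j+\lambda_*\bm{I})(\bm{d}-\bm{d}_*) + \tfrac{\lambda_*}{2}\bigl(\|\bm{d}_*\|_2^2 - \|\bm{d}\|_2^2\bigr).
\end{equation*}
This follows by expanding both sides and substituting $\bm{g}_j$; it is routine.

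\emph{Sufficiency.} Assume \eqref{eq:tr_subproblem_optimal_condition} holds and take any feasible $\bm{d}$. The first term of the identity is nonnegative because $\bm{H}_j+\lambda_*\bm{I}\succeq0$. For the second term, complementarity gives two cases. If $\lambda_*=0$, the term vanishes. If $\lambda_*>0$, then $\|\bm{d}_*\|_2=r_j\ge\|\bm{d}\|_2$, so the term is nonnegative. In either case $m_j(\bm{d})\ge m_j(\bm{d}_*)$. For uniqueness when $\bm{H}_j+\lambda_*\bm{I}\succ0$: if $m_j(\bm{d})=m_j(\bm{d}_*)$, the first term must vanish, which forces $\bm{d}=\bm{d}_*$.

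\emph{Necessity.} This is the main obstacle, since the ball is nonconvex-friendly only through the special structure of the problem. Let $\bm{d}_*$ be a global minimizer and split into cases.

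If $\|\bm{d}_*\|_2<r_j$, then $\bm{d}_*$ is an unconstrained local minimizer of the quadratic. The first- and second-order necessary conditions give $\bm{g}_j+\bm{H}_j\bm{d}_*=0$ and $\bm{H}_j\succeq0$, so we take $\lambda_*=0$.

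If $\|\bm{d}_*\|_2=r_j$, the constraint gradient $2\bm{d}_*\neq0$ (assuming $r_j>0$), so LICQ holds. The KKT conditions then provide $\lambda_*\ge0$ with $\bm{g}_j+(\bm{H}_j+\lambda_*\bm{I})\bm{d}_*=0$. The remaining task is to establish $\bm{H}_j+\lambda_*\bm{I}\succeq0$ using global optimality rather than only a local second-order condition on the tangent space. We argue as follows. Any $\bm{d}$ with $\|\bm{d}\|_2=r_j$ is feasible, and for such $\bm{d}$ the identity reduces to
\begin{equation*}
0\le m_j(\bm{d})-m_j(\bm{d}_*)=\tfrac12(\bm{d}-\bm{d}_*)^{\top}(\bm{H}_j+\lambda_*\bm{I})(\bm{d}-\bm{d}_*).
\end{equation*}
Now fix any $\bm{w}$ with $\bm{w}^{\top}\bm{d}_*\ne0$ and reflect $\bm{d}_*$ across the hyperplane orthogonal to $\bm{w}$:
\begin{equation*}
\bm{d}=\bm{d}_*-\frac{2\bm{w}^{\top}\bm{d}_*}{\|\bm{w}\|_2^2}\,\bm{w}.
\end{equation*}
This $\bm{d}$ lies on the sphere, and $\bm{d}-\bm{d}_*$ is a nonzero multiple of $\bm{w}$, so $\bm{w}^{\top}(\bm{H}_j+\lambda_*\bm{I})\bm{w}\ge0$. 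Vectors with $\bm{w}^{\top}\bm{d}_*\ne0$ are dense, so continuity extends the inequality to every $\bm{w}$, giving $\bm{H}_j+\lambda_*\bm{I}\succeq0$.

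Complementarity and feasibility hold trivially in both cases, which completes the characterization. We treat everything over the reals, matching the real inner-product form of $m_j$; the complex variables of \eqref{eq:transferd_nonconstrainted_minimization} are handled by identifying $\mathbb{C}^{N-1}$ with a real space.
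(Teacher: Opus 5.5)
Your argument is correct and is the standard one: the completing-the-square identity gives sufficiency and uniqueness, and in the boundary case the reflection $\bm{d}_*-2(\bm{w}^{\top}\bm{d}_*/\|\bm{w}\|_2^2)\bm{w}$, together with a density argument, yields $\bm{H}_j+\lambda_*\bm{I}\succeq 0$ on the whole space rather than only on the tangent space. The paper gives no proof of its own and simply cites Conn--Gould--Toint (Corollary 7.2.2), which as far as I recall rests on this same identity-plus-reflection argument, so your proposal matches the intended route.
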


Algorithm~\ref{alg:unified_imex_tr} summarizes the IMEX-TR method for solving the problem \eqref{eq:transferd_nonconstrainted_minimization}. Algorithm~\ref{alg:contract}, the CONTRACT procedure, is designed to adjust the trust region radius. Algorithm~\ref{alg:adaptive-implicit-explicit} in the next subsection will present the customized solver for the TR subproblem \eqref{eq:tr_subproblem_with_subscript}.

\begin{algorithm}[htbp]
\caption{IMEX-TR method for solving the problem \eqref{eq:transferd_nonconstrainted_minimization}}\label{alg:unified_imex_tr}
\begin{algorithmic}[1]
\Require Initial guess $\bm{v}_0$, initial radius $r_0$, parameters $\theta \in (0,1)$, $0<\gamma_C<1<\gamma_E$, $\gamma_\lambda > 1$, $0<\underline{\mu}\leq \bar{\mu}$, $\mu_0\geq \underline{\mu}$, $0 < r_0 \leq \nu_0$, tolerance $\varepsilon>0$.
\For{$j = 0, 1, \dots $}
    \State Compute $\bm{g}_j = \nabla \bar{E}(\bm{v}_j)$
    \State \textbf{if}\ {$\|\bm{g}_{j}\|_\infty < \varepsilon$}, \Return $\bm{v}_{j}$. \textbf{end if}
    \State Solve the TR subproblem \eqref{eq:tr_subproblem_with_subscript} (see Algorithm~\ref{alg:adaptive-implicit-explicit}), let $\bm{s}_j$ and $\lambda_j$ satisfy the optimality conditions~\eqref{eq:tr_subproblem_optimal_condition}, and $\bm{s}_j$ be the global minimizer.
    \If {$j>0$ and $\varrho_{j-1} < \theta$}
        \State $\mu_j = \max\{\mu_{j-1},\lambda_j/\|\bm{s}_j\|_2\}$
    \EndIf
    \State Calculate $\varrho_j = (\bar{E}(\bm{v}_j) - \bar{E}(\bm{v}_j + \bm{s}_j))/\|\bm{s}_j\|_2^3$
    \If{$\varrho_j \geq \theta$ and either $\lambda_j \leq \mu_j\|\bm{s}_j\|_2$ or $\|\bm{s}_j\|_2 = \nu_j$}
        \State $\bm{v}_{j+1} = \bm{v}_j + \bm{s}_j$,\ $\nu_{j+1} = \max\{\nu_j, \gamma_E\|\bm{s}_j\|_2\}$
        \State $r_{j+1} = \min\{\nu_{j+1}, \max\{r_j,\gamma_E\|\bm{s}_j\|_2\}\}$,\ $\mu_{j+1} = \max\{\mu_{j}, \lambda_j/\|\bm{s}_j\|_2\}$
    \ElsIf{$\varrho_j < \theta$}
        \State $\bm{v}_{j+1} = \bm{v}_j$,\ $\nu_{j+1} = \nu_j$,\ $r_{j+1} = \text{CONTRACT}(\bm{v}_j,r_j,\mu_j,\bm{s}_j,\lambda_j)$.
    \Else
        \State $\bm{v}_{j+1} = \bm{v}_j$,\ $\nu_{j+1} = \nu_j$,\ $r_{j+1} = \min\{\nu_{j+1},\lambda_j/\mu_j\}$,\ $\mu_{j+1} = \mu_j$
    \EndIf
\EndFor
\end{algorithmic}
\end{algorithm}

\begin{algorithm}[htbp]
\caption{CONTRACT procedure}\label{alg:contract}
\begin{algorithmic}[1]
\State \textbf{procedure} CONTRACT($\bm{v}_j$, $r_j$, $\mu_j$, $\bm{s}_j$, $\lambda_j$)
    \If{$\lambda_j < \underline{\mu}\|\bm{s}_j\|_2$}
        \State $\hat{\lambda} = \lambda_j + (\underline{\mu}\|\bm{g}_j\|_2)^{1/2}$
        \State $\tilde{\lambda}=\hat{\lambda}$, solve the linear system $(\bm{H}_j +\tilde{\lambda}\bm{I})\bm{s}=-\bm{g}_j$ to get $\bm{s}$
        \If{$\tilde{\lambda}/\|\bm{s}\|_2\leq \bar{\mu}$}
            \State \Return $r_{j+1} = \|\bm{s}\|_2$
        \Else
            \State Find $\tilde{\lambda}\in (\lambda_j,\hat{\lambda})$ such that $\underline{\mu}\leq \tilde{\lambda}/\|\bm{s}\|_2 \leq \bar{\mu}$, where $\bm{s}$ is the solution of the linear system $(\bm{H}_j +\tilde{\lambda}\bm{I})\bm{s}=-\bm{g}_j$
            \State \Return $r_{j+1} = \|\bm{s}\|_2$
        \EndIf
    \Else 
        \State $\tilde{\lambda} = \gamma_{\lambda}\lambda_j$ and solve the linear system $(\bm{H}_j +\tilde{\lambda}\bm{I})\bm{s}=-\bm{g}_j$ to get $\bm{s}$
        \If{$\|\bm{s}\|_2 \geq \gamma_C \|\bm{s}_j\|_2$}
            \State \Return $r_{j+1} = \|\bm{s}\|_2$
        \Else
            \State \Return $r_{j+1} = \gamma_C\|\bm{s}\|_2$
        \EndIf
    \EndIf
\State \textbf{end procedure}
\end{algorithmic}
\end{algorithm}

\subsubsection{Convergence to Second-Order Stationary Points}
We first present two important properties of the gradient and Hessian of the energy functional $\bar{E}(\bm{v})$ with $\{\bm{v}_j\}$ and $\{\bm{s}_j\}$ generated by Algorithm~\ref{alg:unified_imex_tr}. These properties are assumptions in the TRACE theory~\cite{curtis2017trust} and are verified in the following theorem.

\begin{theorem}\label{thm:grad_hess_lip}
Let $\bm{s}_j$ be the global minimizer of the TR subproblem at the $j$-th iteration of Algorithm~\ref{alg:unified_imex_tr}, and let sequence $\{\bm{v}_j\}$ be generated by Algorithm~\ref{alg:unified_imex_tr}. Then the gradient sequence $\{\nabla \bar{E}(\bm{v}_j)\}$ is bounded and the function $\nabla \bar{E}$ is Lipschitz continuous with a scalar Lipschitz constant $g_{\text{Lip}} > 0$ in an open convex set containing the iterate sequence $\{\bm{v}_j\}$ and $\{\bm{v}_j+\bm{s}_j\}$. Furthermore, the Hessian function $\nabla^2 \bar{E}$ is Lipschitz continuous on a path defined by the sequence of $\{\bm{v}_j\}$ and $\{\bm{v}_j+\bm{s}_j\}$.
\end{theorem}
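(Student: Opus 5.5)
The plan is to show first that all iterates and trial points stay in a bounded set, and then to use the polynomial structure of $\bar E$ on that set. Algorithm~\ref{alg:unified_imex_tr} accepts a step only when $\varrho_j\ge\theta>0$, so $\bar E(\bm{v}_{j+1})\le \bar E(\bm{v}_j)-\theta\|\bm{s}_j\|_2^3\le \bar E(\bm{v}_j)$. Hence every iterate lies in the sublevel set $\mathcal{L}_0=\{\bm{v}:\bar E(\bm{v})\le \bar E(\bm{v}_0)\}$.

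Next I show $\mathcal{L}_0$ is bounded, i.e.\ $\bar E$ is coercive. By Parseval the discrete energy equals the grid average of $\tfrac12|(\Delta+1)\psi|^2+\tfrac{\tau}{2}\psi^2-\tfrac{\gamma}{6}\psi^3+\tfrac1{24}\psi^4$. The term $G$ is nonnegative. Pointwise, the polynomial $F(\psi)\ge \tfrac1{48}\psi^4 - C(\tau,\gamma)$. Therefore
\[
\bar E(\bm{v})\ge \tfrac1{48}\tfrac1N\sum_i\psi_i^4 - C \ge \tfrac1{48}\Bigl(\tfrac1N\sum_i\psi_i^2\Bigr)^2 - C,
\]
and $\tfrac1N\sum_i\psi_i^2=\|\hat{\bm\Psi}\|_2^2=\|\bm{v}\|_2^2$. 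So $\mathcal{L}_0\subset B(0,R_0)$ for some finite $R_0$.

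The trial points $\bm{v}_j+\bm{s}_j$ need a uniform bound on $\|\bm{s}_j\|_2$. This is where I expect the main obstacle, since TRACE lets the radius grow through $\nu_j$. The cleanest route uses global optimality of $\bm{s}_j$ and avoids the radius entirely. Because $m_j(\bm{s}_j)\le m_j(0)$, we have $\bm{g}_j^\top\bm{s}_j+\tfrac12\bm{s}_j^\top\bm{H}_j\bm{s}_j\le 0$. Case 1: $\lambda_j=0$. Then $\bm{H}_j\succeq0$ and $\bm{H}_j\bm{s}_j=-\bm{g}_j$. If $\bm{H}_j$ is singular, pick the minimum-norm solution; this is what the subproblem solver returns when it is initialised at zero. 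Case 2: $\lambda_j>0$. Then $\|\bm{s}_j\|_2=r_j$, and I need $r_j$ bounded. For this I would invoke the TRACE mechanism: $r_{j+1}\le\nu_{j+1}$, and $\nu$ only increases at accepted steps, by $\gamma_E\|\bm{s}_j\|_2$. Each accepted step decreases $\bar E$ by at least $\theta\|\bm{s}_j\|_2^3$, while $\bar E$ is bounded below on $\mathcal{L}_0$ by $-C$. This gives
\[
\|\bm{s}_j\|_2\le\bigl((\bar E(\bm{v}_0)+C)/\theta\bigr)^{1/3},
\]
so $\nu_j\le\max\{\nu_0,\gamma_E((\bar E(\bm{v}_0)+C)/\theta)^{1/3}\}=:S$. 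Rejected trial steps also satisfy $\|\bm{s}_j\|_2\le r_j\le\nu_j\le S$. The same bound covers Case 1 whenever the step respects the radius, which it always does. Hence all points $\bm{v}_j$ and $\bm{v}_j+\bm{s}_j$ lie in the open convex ball $\mathcal{B}=B(0,R_0+S+1)$.

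Finally, the regularity claims follow on $\mathcal{B}$. The gradient is $\nabla\bar E(\bm{v})=\bm{P}(\bm\Lambda+\mathscr{F}_N\bm\Gamma\mathscr{F}_N^{-1})\bm{P}^\top\bm{v}$, and its entries are cubic polynomials in $\bm{v}$. Thus $\bar E$ is $C^\infty$, and $\{\nabla\bar E(\bm{v}_j)\}$ is bounded by $\max_{\bar{\mathcal{B}}}\|\nabla\bar E\|$. For the Lipschitz constant I use the mean value theorem on the convex set $\mathcal{B}$ with $g_{\text{Lip}}=\sup_{\mathcal B}\|\nabla^2\bar E\|$. Here $\|\nabla^2\bar E\|\le\|\bm\Lambda\|+\max_i|\tau-\gamma\psi_i+\tfrac12\psi_i^2|$, using that $\mathscr{F}_N$ is unitary for the scaled inner product. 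This is finite because $|\psi_i|\le\sqrt N\|\bm{v}\|_2$ is bounded on $\mathcal{B}$. For the Hessian, $\nabla^3\bar E$ corresponds to the diagonal entries $-\gamma+\psi_i$ in physical space, so $\|\nabla^3\bar E\|$ is bounded on $\mathcal{B}$ in the tensor norm from the Notation paragraph. The segments $[\bm{v}_j,\bm{v}_j+\bm{s}_j]$ lie in $\mathcal{B}$, so the mean value theorem gives the Hessian Lipschitz property along the path.
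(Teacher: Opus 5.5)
Your proof is correct and follows the same route as the paper. Monotone decrease plus coercivity puts $\{\bm v_j\}$ in a compact sublevel set, a ball $\mathcal B$ then contains every $\bm v_j$ and $\bm v_j+\bm s_j$, and the mean value inequality with $\sup_{\bar{\mathcal B}}\|\nabla^2\bar E\|$ and $\sup_{\bar{\mathcal B}}\|\nabla^3\bar E\|$ gives both Lipschitz statements. Your version is in fact more complete than the paper's, which only writes $\|\bm s_j\|_2\le r_j$ without bounding $r_j$ uniformly and asserts coercivity without proof. Your lower bound $\tfrac1{48}\|\bm v\|_2^4-C$ and your bound $r_j\le\nu_j\le\max\{\nu_0,\gamma_E((\bar E(\bm v_0)+C)/\theta)^{1/3}\}$ supply exactly these missing steps. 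The bound $r_{j+1}\le\nu_{j+1}$ also holds in the CONTRACT branch, because CONTRACT never returns a radius larger than $\|\bm s_j\|_2$. Your Case 1 detour about minimum-norm solutions is unnecessary, and it is not accurate for the paper's solver, which starts at $-r\bm g/\|\bm g\|_2$.
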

\begin{proof}
By the continuity and coercivity of $\bar{E}$ and the decrease of $\bar{E}(\bm{v}_j)$, the sublevel set $\mathcal{L} = \{\bm{v} \mid \bar{E}(\bm{v}) \leq \bar{E}(\bm{v}_0)\}$ is compact and $\{\bm{v}_j\}\subset \mathcal{L}$, hence the sequences $\{\bm{v}_j\}$ and $\{\nabla \bar{E}(\bm{v}_j)\}$ are bounded. As the trial step $\bm{s}_j$ is bounded $\|\bm{s}_j\|_2 \leq r_j$, thus $\{\bm{v}_j + \bm{s}_j\}$ is also bounded. By the boundedness of $\{\bm{v}_j\}$ and $\{\bm{v}_j + \bm{s}_j\}$, there exists $R>0$ such that $\|\bm{v}_j\|_2 \leq R$ and $\|\bm{v}_j + \bm{s}_j\|_2 \leq R$ for all $j$. Let $\mathcal{B}:=\{\bm{v}\mid \|\bm{v}\|_2<R+\delta\}$, where $\delta > 0$, then $\mathcal{B}$ is a bounded open convex set containing $\{\bm{v}_j\}$ and $\{\bm{v}_j + \bm{s}_j\}$. 

By the mean value inequality, for any $\bm{u}, \bm{v} \in \mathcal{B}$,
\[\|\nabla \bar{E}(\bm{u}) - \nabla \bar{E}(\bm{v})\| \leq \sup_{\bm{w}\in \bar{\mathcal{B}}} \|\nabla^2 \bar{E}(\bm{w})\|\|\bm{u} - \bm{v}\|_2,\]
where $\bar{\mathcal{B}}$ is the closure of $\mathcal{B}$. By the continuity of $\nabla^2 \bar{E}$ and the compactness of $\bar{\mathcal{B}}$, we have $\sup_{\bm{w}\in \bar{\mathcal{B}}} \|\nabla^2 \bar{E}(\bm{w})\| < \infty$, with $g_{\text{Lip}} := \sup_{\bm{w}\in \bar{\mathcal{B}}} \|\nabla^2 \bar{E}(\bm{w})\|$, the Lipschitz continuity of $\nabla \bar{E}$ on $\mathcal{B}$ follows.

To prove the Lipschitz continuity of $\nabla^2 \bar{E}$ on the path defined by the sequences $\{\bm{v}_j\}$ and $\{\bm{v}_j + \bm{s}_j\}$, we only need to show that $\nabla^2 \bar{E}$ is Lipschitz continuous on $\bar{\mathcal{B}}$. According to the mean value inequality, for any $\bm{u}, \bm{v} \in \bar{\mathcal{B}}$, we have
\[
\|\nabla^2\bar E(\bm{u})-\nabla^2\bar E(\bm{v})\|
\leq \sup_{\bm{w}\in\bar{\mathcal{B}}} \|\nabla^3\bar E(\bm{w})\|\|\bm{u}-\bm{v}\|_2.
\]
As the third-order derivative $\nabla^3 \bar{E}$ exists and is continuous, it follows that
\[
\|\nabla^2\bar E(\bm{u})-\nabla^2\bar E(\bm{v})\| \leq h_{\text{Lip}}\|\bm{u}-\bm{v}\|_2,
\]
where $h_{\text{Lip}}:=\sup_{\bm{w}\in\bar{\mathcal{B}}} \|\nabla^3\bar E(\bm{w})\| < \infty$. This completes the proof.
\end{proof}

Combining the model-specific Lipschitz estimates in Theorem~\ref{thm:grad_hess_lip}, the global optimality of the TR  subproblem solver in Theorem~\ref{thm:global_optimality}, and the TRACE convergence results in~\cite{curtis2017trust}, we obtain the following outer convergence result.
\begin{theorem}[Theorem 3.14, Theorem 3.26 \cite{curtis2017trust}]
Let $\{ \bm{v}_j \}$ be the sequence generated by Algorithm~\ref{alg:unified_imex_tr}. Suppose that the TR  subproblem at each iteration is solved to global optimality as guaranteed by Theorem~\ref{thm:global_optimality} under the conditions specified therein. Then
 $$\lim_{j \to \infty} \|\nabla \bar{E}(\bm{v}_j)\| = 0, \quad \lim\inf_{j\rightarrow\infty}\nabla^2 \bar{E}(\bm{v}_j) \succeq 0.$$
\end{theorem}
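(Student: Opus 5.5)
The proof is a reduction to the TRACE convergence theory of \cite{curtis2017trust}. Theorems~3.14 and~3.26 there are stated for the TRACE iteration under a short list of standing assumptions, so the task is to check that Algorithm~\ref{alg:unified_imex_tr} is an instance of TRACE and that every assumption holds for $\bar{E}$. I would proceed in three steps. First, I would compare the outer loop line by line with TRACE: the acceptance test on $\varrho_j$ with the cubic denominator $\|\bm{s}_j\|_2^3$, the updates of $\nu_j$, $r_j$ and $\mu_j$, and the CONTRACT procedure (Algorithm~\ref{alg:contract}) all reproduce TRACE's rules. The one component that differs is how the subproblem is solved.

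The second step is to verify the function assumptions. TRACE requires that $\bar{E}$ be twice continuously differentiable and bounded below. It also requires that $\nabla\bar{E}$ be bounded along the iterates and Lipschitz continuous on an open convex set containing $\{\bm{v}_j\}$ and $\{\bm{v}_j+\bm{s}_j\}$, and that $\nabla^2\bar{E}$ be Lipschitz continuous on the path through these points. Smoothness is immediate because $\bar{E}$ is a quartic polynomial in $\bm{v}$. Boundedness below follows from the coercive quartic term together with $\bm{\Lambda}\succeq 0$. The remaining assumptions are exactly the conclusions of Theorem~\ref{thm:grad_hess_lip}, which I would invoke directly.

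The third step concerns the subproblem assumptions, which are the real interface. TRACE needs each trial pair $(\bm{s}_j,\lambda_j)$ to satisfy the optimality system \eqref{eq:tr_subproblem_optimal_condition}, where $\lambda_j$ is the multiplier and $\bm{s}_j$ is a global minimizer. The condition $\bm{H}_j+\lambda_j\bm{I}\succeq 0$ is what TRACE uses to extract second-order information. The linear systems inside CONTRACT must also be solvable for the chosen $\tilde{\lambda}>\lambda_j$. This holds because $\bm{H}_j+\tilde{\lambda}\bm{I}\succ 0$ follows from $\bm{H}_j+\lambda_j\bm{I}\succeq 0$, and the systems can then be solved with the same FFT-diagonal structure. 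The global optimality and the multiplier itself are supplied by Theorem~\ref{thm:global_optimality}, under the conditions stated there. Given all of this, Theorem~3.14 of \cite{curtis2017trust} yields $\|\nabla\bar{E}(\bm{v}_j)\|\to 0$, and Theorem~3.26 yields that $\liminf$ of the smallest eigenvalue of $\nabla^2\bar{E}(\bm{v}_j)$ is nonnegative, which is the stated conclusion.

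The main obstacle is a subtlety in the last two steps. Theorem~\ref{thm:grad_hess_lip} assumes a priori that the iterates remain in a compact sublevel set. That requires $\bar{E}(\bm{v}_j)$ to be monotone, which holds because accepted steps have $\varrho_j\ge\theta>0$. It also requires the trial points $\bm{v}_j+\bm{s}_j$ to be uniformly bounded, which needs $r_j\le\nu_j$ with $\nu_j$ bounded. I would argue that $\nu_j$ grows only on accepted steps, so each growth of $\nu_j$ is tied to a step $\bm{s}_j$ with $\|\bm{s}_j\|_2$ bounded by the diameter of the compact sublevel set. A second point is that the stopping test uses $\|\cdot\|_\infty<\varepsilon$. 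For the limit statement I would take $\varepsilon=0$, so that the algorithm runs indefinitely, or else note that the algorithm terminates finitely. Finally, I would remark that $\bar{E}$ is real-valued on Hermitian-symmetric Fourier coefficients, so the problem can be treated as a real optimization problem, which is the setting TRACE requires.
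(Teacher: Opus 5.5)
Your proposal is correct and makes the same reduction as the paper. TRACE's standing assumptions come from Theorem~\ref{thm:grad_hess_lip}, the exact global subproblem solves come from Theorem~\ref{thm:global_optimality}, and the conclusions then follow from Theorems~3.14 and~3.26 of \cite{curtis2017trust}. Your bound on $\nu_j$ (which keeps $r_j$ and the trial points $\bm{v}_j+\bm{s}_j$ bounded), the real reformulation, and the solvability check for the CONTRACT systems supply details the paper leaves implicit.

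Your line-by-line comparison should, however, flag one mismatch. As printed, the final branch of Algorithm~\ref{alg:contract} returns $\gamma_C\|\bm{s}\|_2$, whereas TRACE returns $\gamma_C\|\bm{s}_j\|_2$. This is evidently a typo, since the test just before it compares against $\gamma_C\|\bm{s}_j\|_2$. Your reduction, like the paper's, therefore applies to the TRACE version of that line.
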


\subsection{Efficient Global Solver for the Nonconvex TR Subproblem}\label{subsec:subproblem_solver}
In this subsection, we propose an efficient IMEX algorithm to solve the nonconvex TR subproblem \eqref{eq:tr_subproblem_with_subscript} and analyze the algorithm's ability to converge to the global minimizer of the subproblem.

At each iteration of the IMEX-TR, a core challenge lies in solving the nonconvex subproblem \eqref{eq:tr_subproblem_with_subscript} globally and efficiently. For ease of analysis, we remove the subscript $j$ and the constant term, and restate the subproblem \eqref{eq:tr_subproblem_with_subscript} as
\begin{equation}\label{eq:tr_subproblem}
\min_{\|\bm{d}\|_2\leq r} f(\bm{d}) = \bm{g}^{\top}\bm{d} + \frac{1}{2} \bm{d}^{\top}\bm{H}\bm{d}.
\end{equation}

We exploit the structural properties of the Hessian for the LB model, which can be decomposed as
\[ \bm{H} = \underbrace{\bm{P} \nabla^2 G(\bm{P}^{\top} \bm{v}) \bm{P}^{\top}}_{\bm{D}} + \underbrace{\bm{P} \nabla^2 F(\bm{P}^{\top} \bm{v}) \bm{P}^{\top}}_{\bm{T}} = \bm{D} + \bm{T}.\]
Here, $\bm{D}$ is a positive semi-definite diagonal matrix in reciprocal space.

We develop an algorithm that embeds an adaptive implicit-explicit iteration within the TR loop. The implicit-explicit scheme for the subproblem is
\begin{equation}\label{eq:imex-scheme}
(\bm{I} + \eta (\bm{D} + \lambda_{k+1} \bm{I})) \bm{d}_{k+1} = \bm{d}_k - \eta (\bm{g} + \bm{T} \bm{d}_k),\quad k=0,1,2,\dots
\end{equation}
where $\eta > 0$ is a step size, and $\lambda_{k+1}\geq 0$ is a Lagrange multiplier adjusted to ensure $\|\bm{d}_{k+1}\| \leq r_j$. In this scheme, the diagonal part $\bm{D}$ is treated implicitly while the dense part  $\bm{T}$ is treated explicitly, enabling efficient FFT-based computation while maintaining stability.

By letting $\bm{L}_{k+1}=\bm{D}+\lambda_{k+1}\bm{I}$, $\bar{\bm{g}}_k = \bm{g} + \bm{T} \bm{d}_k$, and substituting these into \eqref{eq:imex-scheme}, then it can be rewritten as
\begin{equation}\label{eq:semi_implicit_2linear_system}
\underbrace{(\bm{I} + \eta \bm{L}_{k+1})}_{\text{Diagonal matrix}} \bm{d}_{k+1} = \bm{d}_k - \eta \bar{\bm{g}}_k.
\end{equation}

The features of scheme \eqref{eq:semi_implicit_2linear_system} are
\begin{itemize}
    \item The matrix $\bm{I} + \eta \bm{L}_{k+1}$ is a diagonal matrix in reciprocal space, thus the update process only involves element-wise operations on the matrix, resulting in low computational complexity.
    \item The coefficient matrix $\bm{I} + \eta \bm{L}_{k+1}$ is always positive definite, which guarantees the existence and uniqueness of the solution to the linear system, ensuring the numerical stability of the scheme.
    \item By adjusting the value of $\lambda_{k+1}$, the norm of the solution can be limited within a certain range to ensure that the solution is within the feasible domain of the subproblem.
\end{itemize}

\begin{remark}
In order to exploit the diagonal structure of $\bm{T}$ in physical space, the product $\bm{T}\bm{d}$ is efficiently computed by performing point-wise multiplication in physical space followed by the FFT. This approach ensures an overall computational complexity of $\mathcal{O}(N \log N)$ per iteration, avoiding the $\mathcal{O}(N^2)$ cost associated with direct matrix-vector multiplication in reciprocal space.
\end{remark}

We first show that the solution of \eqref{eq:semi_implicit_2linear_system} satisfies $\|\bm{d}\|_2\leq r$ by adaptively adjusting the value of $\lambda_{k+1}$, which ensures the solution of the adaptive implicit-explicit iteration is a feasible solution to problem \eqref{eq:tr_subproblem}.

Since $\bm{I} + \eta \bm{L}_{k+1}$ is a diagonal matrix and positive definite, we get
\[
\bm{d}_{k+1} = (\bm{I} + \eta \bm{L}_{k+1})^{-1}\bm{b}_k =
\begin{pmatrix}
[\bm{b}_k]_1/(1 + \eta(\bm{D}_{1,1}+\lambda_{k+1}))\\
\vdots \\
[\bm{b}_k]_{N-1}/(1 + \eta(\bm{D}_{N-1,N-1}+\lambda_{k+1}))
\end{pmatrix},
\]
where $[\bm{b}_k]_i$ denotes the $i$-th component of $\bm{b}_k=\bm{d}_k-\eta \bar{\bm{g}}_k$ and $D_{i,i}$ is the $i$-th diagonal element of the diagonal matrix $\bm{D}$, thus
\begin{equation*}
\phi_k(\lambda_{k+1}) = \|\bm{d}_{k+1}\|_2^2 = 
\sum_{i=1}^{N-1}\left(\frac{[\bm{b}_k]_i}{(1 + \eta(\bm{D}_{i,i}+\lambda_{k+1}))}\right)^2,
\end{equation*}
it is easy to see that $\phi_k(\lambda)$ is a continuously differentiable and monotonically decreasing function with respect to $\lambda$, then there must exist $0 \leq \lambda_{\min} \leq \lambda_{\max} < \infty$, such that for any $\lambda_{k+1}\in[\lambda_{\min}, \lambda_{\max}]$, we have $\phi_k(\lambda_{k+1})\leq r^2$. We set the value of $\lambda_{k+1}$ as
\begin{equation}\label{eq:lambda_update_rule}
    \lambda_{k+1} = \begin{cases}
    0,\ &\text{if}\ ~\phi_k(0) \leq r^2,\\
    \lambda^{'},\ &\text{if}\ ~\phi_k(0) > r^2,
    \end{cases}
\end{equation}
where $\lambda^{'}$ satisfying $\phi_k(\lambda^{'}) = r^2$ is computed via Newton's method
\[\lambda_{\iota+1} = \lambda_\iota - (\phi_k(\lambda_\iota) - r^2)/\phi_k^{\prime}(\lambda_\iota), \iota=0,1,2,\dots,\]
with $\lambda_0 = 0$. The complete adaptive implicit-explicit iteration for solving the TR subproblem \eqref{eq:tr_subproblem} is summarized in Algorithm~\ref{alg:adaptive-implicit-explicit}.

\begin{algorithm}[htbp]
\renewcommand{\algorithmicrequire}{\textbf{Input:}}
\renewcommand{\algorithmicensure}{\textbf{Output:}}
\caption{Adaptive implicit-explicit iteration}
\label{alg:adaptive-implicit-explicit}
\begin{algorithmic}
\Require: Step size $\eta>0$, trust region radius $r>0$, gradient $\bm{g}$, decompose Hessian as $\bm{H}=\bm{D}+\bm{T}$, initial value $\bm{d}_0=-r\bm{g}/\|\bm{g}\|_2$, tolerance $\varepsilon_{\text{sub}}$.
\Ensure $\bm{d}_{k+1}$.
\For {$k = 0,1,\dots$}
    \State $\lambda_{k+1} = 0$.
    \While {$\phi_k(\lambda_{k+1}) > r^2$}
        \Statex \hspace*{\algorithmicindent} Find $\lambda_{k+1}$ satisfies $\phi_k(\lambda_{k+1})=r^2$ by Newton's method
        \State $\lambda_{k+1} \leftarrow \lambda_{k+1} - (\phi_k(\lambda_{k+1}) - r^2)/ \phi^{\prime}_k(\lambda_{k+1})$
    \EndWhile
    \State $\bm{d}_{k+1} = (\bm{I}+\eta(\bm{D}+\lambda_{k+1}\bm{I}))^{-1}(\bm{d}_k - \eta(\bm{g} + \bm{T}\bm{d}_k))$
    \If{$\|\bm{g} + (\bm{H} + \lambda_{k+1}\bm{I})\bm{d}_{k+1} \|_2 < \varepsilon_{\text{sub}}$}
        \State \Return $\bm{d}_{k+1}$.
    \EndIf
\EndFor
\end{algorithmic}
\end{algorithm}

By Algorithm~\ref{alg:adaptive-implicit-explicit} and the update rule \eqref{eq:lambda_update_rule}, we have the following corollary.
\begin{corollary}\label{coro:feasible_kkt}
The sequence $\{(\lambda_k, \bm{d}_k)\}$ generated by Algorithm~\ref{alg:adaptive-implicit-explicit} has the following properties,
    \[\lambda_{k}\geq 0,\quad \|\bm{d}_{k}\|_2\leq r,\quad \lambda_{k}(\|\bm{d}_{k}\|_2 - r) = 0.\]
\end{corollary}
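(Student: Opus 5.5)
The plan is to argue one index $k\ge 1$ at a time. For each $k$ we look at the scalar function $\phi_{k-1}$ that produced $\lambda_k$ and split according to the two branches of the update rule \eqref{eq:lambda_update_rule}. The iterate $\bm{d}_k$ is the unique solution of the diagonal system \eqref{eq:semi_implicit_2linear_system}. That matrix is positive definite because $\bm{D}\succeq 0$, $\eta>0$ and $\lambda_k\ge 0$. So $\|\bm{d}_k\|_2^2=\phi_{k-1}(\lambda_k)$ holds by the explicit componentwise formula, and all three properties reduce to statements about the scalar $\lambda_k$ and the value $\phi_{k-1}(\lambda_k)$. For $k=0$ the multiplier is not produced by the iteration. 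We take $\lambda_0=0$, and then all three conditions hold trivially because $\|\bm{d}_0\|_2=\|-r\bm{g}/\|\bm{g}\|_2\|_2=r$.

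\textbf{Case 1: $\phi_{k-1}(0)\le r^2$.} The rule sets $\lambda_k=0$. This immediately gives $\lambda_k\ge 0$, $\|\bm{d}_k\|_2^2=\phi_{k-1}(0)\le r^2$, and $\lambda_k(\|\bm{d}_k\|_2-r)=0$.

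\textbf{Case 2: $\phi_{k-1}(0)>r^2$.} Here $\lambda_k=\lambda'$ with $\phi_{k-1}(\lambda')=r^2$, so $\|\bm{d}_k\|_2=r$ and complementarity holds trivially. What remains is to show that such a $\lambda'$ exists, that it is nonnegative, and that it is exactly what the Newton loop of Algorithm~\ref{alg:adaptive-implicit-explicit} returns.

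Existence and uniqueness on $[0,\infty)$ come from the closed form
\[
\phi_{k-1}(\lambda)=\sum_i [\bm{b}_{k-1}]_i^2\,\bigl(1+\eta(\bm{D}_{i,i}+\lambda)\bigr)^{-2}.
\]
This function is continuous and strictly decreasing on $[0,\infty)$: some $[\bm{b}_{k-1}]_i\neq 0$, since $\phi_{k-1}(0)>r^2>0$. It also tends to $0$ as $\lambda\to\infty$. The intermediate value theorem then gives a unique root $\lambda'>0$.

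\textbf{Main obstacle: convergence of the Newton iteration from $\lambda_0=0$ to $\lambda'$.} I would use convexity. Each summand $c_i(a_i+\eta\lambda)^{-2}$, with $c_i\ge 0$ and $a_i>0$, is convex and decreasing, so $\phi_{k-1}$ is convex and decreasing on $[0,\infty)$. For such a function, Newton's method for $\phi_{k-1}(\lambda)=r^2$ started at a point with $\phi_{k-1}(\lambda_0)>r^2$ behaves monotonically, which I would show in three steps.
\begin{itemize}
\item The iterates increase: the step $-(\phi-r^2)/\phi'$ is positive while $\phi>r^2$, since $\phi'<0$.
\item The iterates never overshoot the root: convexity makes the tangent line lie below the graph, so the tangent's zero $\lambda_{\iota+1}$ satisfies $\phi(\lambda_{\iota+1})\ge r^2$, i.e. $\lambda_{\iota+1}\le\lambda'$.
\item The iterates converge to $\lambda'$: a bounded increasing sequence has a limit, and passing to the limit in the Newton recursion shows the limit is a root, hence equals $\lambda'$.
\end{itemize}
Consequently every iterate lies in $[0,\lambda']$ and is nonnegative.

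A caveat concerns the stopping rule. The while-loop stops only when $\phi\le r^2$. In exact arithmetic the never-overshoot property means this happens only at $\lambda'$ itself, giving $\phi=r^2$ exactly. In practice one stops at a tolerance; I would state the corollary for the exact root $\lambda'$ prescribed in \eqref{eq:lambda_update_rule}.

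Combining the two cases over all $k$ yields the three stated properties.
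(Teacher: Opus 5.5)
Your proof is correct and follows the paper's route: the paper reads the three properties directly off the two branches of the update rule \eqref{eq:lambda_update_rule}, namely $\lambda_{k+1}=0$ when $\phi_k(0)\le r^2$, and $\lambda_{k+1}=\lambda'$ with $\phi_k(\lambda')=r^2$ otherwise. What you add fills in details the paper leaves implicit: existence and positivity of $\lambda'$, monotone convergence of Newton's method via convexity of $\phi_k$, the $k=0$ convention, and your caveat that the corollary must be read with $\lambda_{k+1}$ equal to the exact root, since Newton iterates approach $\lambda'$ from below and the while-loop test $\phi_k>r^2$ is never triggered off in exact arithmetic.
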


By the above Corollary \ref{coro:feasible_kkt}, we need only focus on the remaining two conditions in the
optimality conditions \eqref{eq:tr_subproblem_optimal_condition}. In the following, we first focus on the condition $\bm{g} + (\bm{H} + \lambda^*\bm{I})\bm{d}^* = 0$, and then the condition $\bm{H} + \lambda^* \bm{I} \succeq 0$.

\subsubsection{Global Convergence of the IMEX solver for the TR subproblem}
We analyze the properties of the sequence $\{\bm{d}_k\}$ generated by Algorithm~\ref{alg:adaptive-implicit-explicit} for solving the nonconvex subproblem. 

Let $\beta^2 = \lambda \geq 0$ and define the Lagrangian function associated with the TR subproblem \eqref{eq:tr_subproblem} as
\begin{equation}\label{eq:lagrangian_function}
L(\bm{d}, \beta) = f(\bm{d}) + \frac{\beta^2}{2}(\|\bm{d}\|_2^2-r^2).
\end{equation}
We first present the following Lemma~\ref{lemma:function_value_decrease_tr} to describe the decreasing property of function values of $L$, which plays a crucial role in proving the convergence for the scheme \eqref{eq:semi_implicit_2linear_system}.

\begin{lemma}\label{lemma:function_value_decrease_tr}
For any $a > 0$, if step size $\eta$ satisfies the inequality $\eta^{-1} \geq \|\bm{T}\| + 2a$,
then we get the descent property for the function values of $f$ in \eqref{eq:tr_subproblem} and $L$ in \eqref{eq:lagrangian_function},
\[
\begin{aligned}
&f(\bm{d}_k) - f(\bm{d}_{k+1})\geq a\|\bm{d}_{k+1}-\bm{d}_k\|_2^2,\\
&L(\bm{d}_k, \beta_k) - L(\bm{d}_{k+1}, \beta_{k+1})\geq a\|\bm{d}_{k+1}-\bm{d}_k\|_2^2.
\end{aligned}
\]
\begin{proof}
We define an auxiliary function and show that the update rule \eqref{eq:semi_implicit_2linear_system} is equivalent to minimizing this auxiliary function, \textit{i.e.},
\begin{equation}\label{eq:auxiliary_function}
\bm{d}_{k+1} = \argmin_{\bm{d}} \{ f_k(\bm{d}) \},
\end{equation}
where the objective function is defined as
\[
f_k(\bm{d}) = \frac{1}{2} \bm{d}^{\top}\bm{L}_{k+1} \bm{d} + (\bm{g}+\frac{1}{2}\bm{T}\bm{d}_k)^{\top}\bm{d}_k + \bar{\bm{g}}_k^{\top}(\bm{d} - \bm{d}_k) + \frac{1}{2 \eta} \lVert \bm{d} - \bm{d}_k \rVert_2^2,
\]
By the optimality of $\bm{d}_{k+1}$ in \eqref{eq:auxiliary_function}, we have
\begin{align*}
&f(\bm{d}_{k}) + \frac{\beta_{k+1}^2}{2}\|\bm{d}_{k}\|_2^2 = f_k(\bm{d}_{k}) \geq f_k(\bm{d}_{k+1})\\
=& \frac12 (\bm{d}_{k+1})^{\top}\bm{L}_{k+1}(\bm{d}_{k+1}) + (\bm{g}+\frac12\bm{T}\bm{d}_{k})^{\top}(\bm{d}_{k}) + (\bar{\bm{g}}_k)^{\top}(\bm{d}_{k+1}-\bm{d}_{k}) + \frac{1}{2\eta}\|\bm{d}_{k+1}-\bm{d}_{k}\|_2^2\\
=& \frac{1}{2} (\bm{d}_{k+1})^{\top}(\bm{D}\bm{d}_{k+1}) + (\bm{g})^{\top}(\bm{d}_{k+1}) + (\bm{T}\bm{d}_{k+1})^{\top}(\bm{d}_{k+1}-\bm{d}_{k}) + \frac{1}{2\eta}\|\bm{d}_{k+1}-\bm{d}_{k}\|_2^2\\
& - (\bm{T}(\bm{d}_{k+1}-\bm{d}_{k}))^{\top}(\bm{d}_{k+1}-\bm{d}_{k}) + \frac{1}{2}(\bm{T}\bm{d}_{k})^{\top}(\bm{d}_{k}) + \frac{\beta_{k+1}^2}{2} \|\bm{d}_{k+1}\|_2^2\\
=& f(\bm{d}_{k+1}) + \frac{\beta_{k+1}^2}{2} \|\bm{d}_{k+1}\|_2^2 + \frac{1}{2\eta}\|\bm{d}_{k+1}-\bm{d}_{k}\|_2^2 + \frac12(\bm{T}\bm{d}_{k+1})^{\top}(\bm{d}_{k+1}) \\
& - (\bm{T}(\bm{d}_{k+1}-\bm{d}_{k}))^{\top}(\bm{d}_{k+1}-\bm{d}_{k}) + \frac12(\bm{T}\bm{d}_{k})^{\top}(\bm{d}_{k}) - (\bm{T}\bm{d}_{k+1})^{\top}(\bm{d}_{k})\\
=& f(\bm{d}_{k+1}) + \frac{\beta_{k+1}^2}{2} \|\bm{d}_{k+1}\|_2^2 + \frac{1}{2\eta}\|\bm{d}_{k+1}-\bm{d}_{k}\|_2^2 - \frac12 (\bm{T}(\bm{d}_{k+1}-\bm{d}_{k}))^{\top}(\bm{d}_{k+1}-\bm{d}_{k})\\
=& f(\bm{d}_{k+1}) + \frac{\beta_{k+1}^2}{2} \|\bm{d}_{k+1}\|_2^2 + \frac{1}{2}(\bm{d}_{k+1}-\bm{d}_{k})^{\top}(\frac{1}{\eta}I-\bm{T})(\bm{d}_{k+1}-\bm{d}_{k}).\\
\end{align*}
Since $\|\bm{d}_{k}\|_2^2\leq r^2$, it follows that $f(\bm{d}_{k})+\frac{\beta_{k+1}^2}{2}\|\bm{d}_{k}\|_2^2 \leq f(\bm{d}_{k})+\frac{\beta_{k+1}^2}{2}r^2$.

By combining the above inequality with Corollary~\ref{coro:feasible_kkt}, we have
\[
    f(\bm{d}_{k})+\frac{\beta_{k+1}^2}{2}r^2 \geq f(\bm{d}_{k+1}) + \frac{\beta_{k+1}^2}{2}r^2 + \frac{1}{2}\braket{\bm{d}_{k+1}-\bm{d}_{k},(\frac{1}{\eta}I-\bm{T})(\bm{d}_{k+1}-\bm{d}_{k})}.
\]
Thus, when $1/\eta \geq \|\bm{T}\| + 2a$, it reads $f(\bm{d}_{k}) - f(\bm{d}_{k+1}) \geq a\|\bm{d}_{k+1}-\bm{d}_{k}\|_2^2$,
and
\[
\begin{aligned}
&L(\bm{d}_{k}, \beta_k) - L(\bm{d}_{k+1}, \beta_{k+1})\\
=&f(\bm{d}_{k})+\frac{\beta_k^2}{2}(\|\bm{d}_{k}\|_2^2-r^2) - \left(f(\bm{d}_{k+1}) +\frac{\beta_{k+1}^2}{2}(\|\bm{d}_{k+1}\|_2^2-r^2)\right)\\
=& f(\bm{d}_{k}) - f(\bm{d}_{k+1}) \geq  a\|\bm{d}_{k+1}-\bm{d}_{k}\|_2^2.
\end{aligned}
\]

The second equality above is derived from Corollary~\ref{coro:feasible_kkt}.
\end{proof}
\end{lemma}

Next, we show that the norm of the gradient is bounded, which is also important to our convergence theorem.
\begin{lemma}\label{lemma:gradient_bound_tr}
If the step size $\eta$ satisfies
\begin{equation}\label{eq:step_cond2}
0< \eta_{\min} \leq \eta \leq \frac{1}{\|\bm{T}\|+2a},    
\end{equation}
then there must exist $b>0$, such that
\begin{equation}\label{eq:gradient_bound_tr}
\left\|\begin{pmatrix}
\nabla_d L(\bm{d}_{k+1}, \beta_{k+1})\\
\nabla_{\beta} L(\bm{d}_{k+1}, \beta_{k+1})
\end{pmatrix}\right\|_2 \leq 
b\left\|\bm{d}_{k+1}-\bm{d}_{k}\right\|_2.
\end{equation}
\end{lemma}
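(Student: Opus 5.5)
The plan is to compute the two components of the gradient of $L$ at $(\bm{d}_{k+1},\beta_{k+1})$ explicitly. Then I rewrite each one, using the update rule \eqref{eq:semi_implicit_2linear_system} and Corollary~\ref{coro:feasible_kkt}, as something linear in $\bm{d}_{k+1}-\bm{d}_k$.

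\textbf{The $\bm{d}$-component.} We have
\[
\nabla_d L(\bm{d}_{k+1},\beta_{k+1}) = \bm{g} + (\bm{D}+\bm{T}+\lambda_{k+1}\bm{I})\bm{d}_{k+1}.
\]
The scheme \eqref{eq:imex-scheme} gives
\[
\bm{g} + \bm{T}\bm{d}_k + (\bm{D}+\lambda_{k+1}\bm{I})\bm{d}_{k+1} = -\eta^{-1}(\bm{d}_{k+1}-\bm{d}_k).
\]
Substituting this yields
\[
\nabla_d L(\bm{d}_{k+1},\beta_{k+1}) = \bm{T}(\bm{d}_{k+1}-\bm{d}_k) - \eta^{-1}(\bm{d}_{k+1}-\bm{d}_k).
\]
Its norm is therefore at most $(\|\bm{T}\| + \eta_{\min}^{-1})\|\bm{d}_{k+1}-\bm{d}_k\|_2$. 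This is the only place the lower bound $\eta\ge\eta_{\min}$ in \eqref{eq:step_cond2} is needed.

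\textbf{The $\beta$-component.} We have
\[
\nabla_\beta L(\bm{d}_{k+1},\beta_{k+1}) = \beta_{k+1}(\|\bm{d}_{k+1}\|_2^2 - r^2).
\]
By the complementarity relation in Corollary~\ref{coro:feasible_kkt}, $\lambda_{k+1}(\|\bm{d}_{k+1}\|_2-r)=0$. If $\lambda_{k+1}=0$ then $\beta_{k+1}=0$; otherwise $\|\bm{d}_{k+1}\|_2=r$. In either case the product vanishes, so $\nabla_\beta L(\bm{d}_{k+1},\beta_{k+1})=0$ identically.

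Combining the two components, the bound \eqref{eq:gradient_bound_tr} holds with $b = \|\bm{T}\| + \eta_{\min}^{-1}$. One could also use $b=\eta^{-1}+\|\bm{T}\|$ directly, with $\eta_{\min}$ making $b$ uniform in $k$.

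\textbf{Main obstacle.} The only delicate point is bookkeeping. The multiplier in the gradient must be $\lambda_{k+1}$, the one used to produce $\bm{d}_{k+1}$, so that the implicit term matches exactly. One must also be careful that the Newton loop in Algorithm~\ref{alg:adaptive-implicit-explicit} actually achieves $\phi_k(\lambda_{k+1})=r^2$ when $\lambda_{k+1}>0$. I would take this exactness for granted via Corollary~\ref{coro:feasible_kkt}. Newton's method converges monotonically here, since $\phi_k$ is convex and decreasing from $\lambda=0$.
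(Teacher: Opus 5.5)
Your proposal is correct and follows the paper's proof essentially line for line. The $\beta$-component vanishes by the complementarity in Corollary~\ref{coro:feasible_kkt}, and the scheme \eqref{eq:imex-scheme} rewrites the $\bm{d}$-component as $(\bm{T}-\eta^{-1}\bm{I})(\bm{d}_{k+1}-\bm{d}_k)$, giving the same constant $b=\eta_{\min}^{-1}+\|\bm{T}\|$.
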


\begin{proof}
\[\begin{aligned}
&\left\|\begin{pmatrix}
\nabla_{\bm{d}} L(\bm{d}_{k+1}, \beta_{k+1})\\
\nabla_{\beta} L(\bm{d}_{k+1}, \beta_{k+1})
\end{pmatrix}\right\|_2 = \left\|\begin{pmatrix}
\nabla f(\bm{d}_{k+1}) + \beta_{k+1}^2\bm{d}_{k+1}\\
\beta_{k+1}(\|\bm{d}_{k+1}\|_2^2-r^2)
\end{pmatrix}\right\|_2 \\
=& \left\|\begin{pmatrix}
\nabla f(\bm{d}_{k+1}) + \beta_{k+1}^2\bm{d}_{k+1}\\
0
\end{pmatrix}\right\|_2 \\
=&\|\bm{g}+\bm{T}\bm{d}_{k}+\bm{D}\bm{d}_{k+1}+\beta_{k+1}^2\bm{d}_{k+1} + \bm{T}(\bm{d}_{k+1}-\bm{d}_{k})\|_2\\
=&\|-(\bm{d}_{k+1}-\bm{d}_{k})/\eta + \bm{T}(\bm{d}_{k+1}-\bm{d}_{k})\|_2\\
\leq& ({1/\eta+\|\bm{T}\|})\|\bm{d}_{k+1}-\bm{d}_{k}\|_2\\
\leq& ({1/\eta_{\min}+\|\bm{T}\|})\|\bm{d}_{k+1}-\bm{d}_{k}\|_2.
\end{aligned}\]

The second equality above follows from Corollary~\ref{coro:feasible_kkt} and $\beta_{k+1}^2 = \lambda_{k+1}$, \eqref{eq:imex-scheme} has been used for the fourth equality. Let $b=1/\eta_{\min}+\|\bm{T}\|$ finishes the proof.
\end{proof}

Using the K\L~property and the techniques of Lemma 2.6 from \cite{attouch2013convergence}, we prove that the sequence $\{\bm{d}_{k}\}$ is a Cauchy sequence.

\begin{definition}[K\L~Property\cite{bolte2014proximal}]
Let $L:\mathbb{C}^N\rightarrow  (-\infty,+\infty]$ be a proper and lower semi-continuous function. For each \(\bm{z}_* \in \text{dom } \nabla L\), there exist \(\kappa > 0\), a neighborhood U of \(\bm{z}_*\), and a concave continuous function $\varphi:[0,\kappa)\rightarrow \mathbb{R}_+$, such that 
\begin{equation*}
  \varphi(0) = 0,\ \varphi\in \mathbb{C}^1(0,\kappa),\ \varphi^{\prime}(s)> 0\ \text{for\ all}\ s\in (0,\kappa),\ \text{and}
\end{equation*}
\begin{equation*}
\varphi'(L(\bm{z}) - L(\bm{z}_*)) \text{dist}(0, \nabla L(\bm{z})) \geq 1,
\end{equation*}
for all $\bm{z} \in U \cap \{ \bm{z} : L(\bm{z}_*) < L(\bm{z}) < L(\bm{z}_*) + \kappa \}$, then $L(\bm{z})$ is said to satisfy the K\L~property at point \(\bm{z}_*\). If \(L(\bm{z})\) satisfies the K\L~property at every point $\bm{z}$, then \(L(\bm{z})\) is called a K\L~function.
\end{definition}

\begin{theorem}\label{thm:cauchy_sequence}
Let the sequence $\{\bm{z}_k\}=\{(\bm{d}_{k},\beta_k)\}=\{(\bm{d}_{k},\sqrt{\lambda}_k)\}$ be generated by the IMEX subproblem solver in Algorithm~\ref{alg:unified_imex_tr}. If the step condition \eqref{eq:step_cond2} is satisfied, then we have that
\[
\sum_{k=1}^{\infty}\left\|\bm{d}_{k}-\bm{d}_{k+1}\right\|_2<\infty,
\]
which means $\{\bm{d}_{k}\}$ is a Cauchy sequence and that $\{\bm{d}_{k}\}$ converges to $ \bm{d}_{*}$ as $k\to\infty$.
\end{theorem}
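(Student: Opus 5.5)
We follow the standard Attouch--Bolte--Svaiter template, with the two earlier lemmas supplying the descent and relative-error conditions. We apply it to $L$ from \eqref{eq:lagrangian_function} along $\bm{z}_k=(\bm{d}_k,\beta_k)$.

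\textbf{Boundedness and limit points.} By Corollary~\ref{coro:feasible_kkt}, $\|\bm{d}_k\|_2\le r$ for all $k$. We next bound $\beta_k$. Newton's method from $\lambda_0=0$ on the convex, decreasing function $\phi_k$ produces the unique root of $\phi_k(\lambda)=r^2$. From $\phi_k(\lambda)\le \|\bm{b}_k\|_2^2/(1+\eta\lambda)^2$ and $\|\bm{b}_k\|_2\le r+\eta(\|\bm{g}\|_2+\|\bm{T}\|r)$, that root satisfies $\lambda_{k+1}\le \|\bm{b}_k\|_2/(\eta r)$, which is uniformly bounded. Hence $\{\bm{z}_k\}$ is bounded, and its set of cluster points $\omega$ is nonempty and compact. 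Lemma~\ref{lemma:function_value_decrease_tr} shows that $L(\bm{z}_k)=f(\bm{d}_k)$ is nonincreasing (using complementarity); since $f$ is bounded below on the ball, $L(\bm{z}_k)\downarrow L^*$. Summing the descent inequality gives $\sum_k\|\bm{d}_{k+1}-\bm{d}_k\|_2^2<\infty$, so $\|\bm{d}_{k+1}-\bm{d}_k\|_2\to0$. By continuity, $L\equiv L^*$ on $\omega$. Lemma~\ref{lemma:gradient_bound_tr} then gives $\nabla L(\bm{z}_k)\to0$, so every cluster point is critical.

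\textbf{KŁ and uniformization.} $L$ is a polynomial in the real and imaginary parts of $(\bm{d},\beta)$, hence real-analytic and semialgebraic, so it is a KŁ function. Invoking the uniformized KŁ lemma of \cite{bolte2014proximal} on the compact set $\omega$, we obtain a single $\varphi$, $\kappa$ and $\epsilon$. These satisfy $\varphi'(L(\bm{z}_k)-L^*)\,\|\nabla L(\bm{z}_k)\|\ge1$ for all large $k$ with $\mathrm{dist}(\bm{z}_k,\omega)<\epsilon$ and $L^*<L(\bm{z}_k)<L^*+\kappa$. If $L(\bm{z}_K)=L^*$ for some $K$, descent forces $\bm{d}_k$ to be constant from $K$ on and we are done, so we assume $L(\bm{z}_k)>L^*$ for all $k$.

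\textbf{Finite length.} Write $\Delta_k=\varphi(L(\bm{z}_k)-L^*)-\varphi(L(\bm{z}_{k+1})-L^*)$. Concavity of $\varphi$ together with the two lemmas gives
\[
\Delta_k\ge \varphi'(L(\bm{z}_k)-L^*)\bigl(L(\bm{z}_k)-L(\bm{z}_{k+1})\bigr)\ge \frac{a\|\bm{d}_{k+1}-\bm{d}_k\|_2^2}{b\|\bm{d}_k-\bm{d}_{k-1}\|_2}.
\]
Taking square roots and applying $2\sqrt{xy}\le x+y$ yields
\[
2\|\bm{d}_{k+1}-\bm{d}_k\|_2\le\|\bm{d}_k-\bm{d}_{k-1}\|_2+\tfrac{b}{a}\Delta_k.
\]
Summing telescopes $\Delta_k$ and gives $\sum_k\|\bm{d}_{k+1}-\bm{d}_k\|_2<\infty$. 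Hence $\{\bm{d}_k\}$ is Cauchy and converges to some $\bm{d}_*$.

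\textbf{Main obstacle.} The relative-error bound of Lemma~\ref{lemma:gradient_bound_tr} controls only $\|\bm{d}_{k+1}-\bm{d}_k\|_2$, and the KŁ inequality is stated for $\bm{z}=(\bm{d},\beta)$. The difficulty is to make these fit. Because $\nabla_\beta L=0$ along the iterates, and $L(\bm{z}_k)=f(\bm{d}_k)$ by complementarity, it suffices to track $\bm{d}$ alone; this is why the chain above involves only $\bm{d}$-differences. The remaining care needed is checking that the iterates eventually stay in the $\epsilon$-neighbourhood of $\omega$ where the uniformized inequality holds. This follows from $\mathrm{dist}(\bm{z}_k,\omega)\to0$, which is automatic for a bounded sequence.
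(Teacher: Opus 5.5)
Your proof is correct. It shares the paper's skeleton: sufficient decrease from Lemma~\ref{lemma:function_value_decrease_tr}, relative error from Lemma~\ref{lemma:gradient_bound_tr}, the one-step inequality $2\|\bm{d}_{k+1}-\bm{d}_k\|_2\le\|\bm{d}_k-\bm{d}_{k-1}\|_2+\frac{b}{a}\Delta_k$, and telescoping. Where you differ is in how you make the K\L{} inequality applicable along the tail of the sequence.

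The paper follows Attouch--Bolte--Svaiter (Lemma 2.6). It uses K\L{} at a single cluster point $\bm{z}_*$ and chooses $k_0$ so that $\|\bm{z}_{k_0}-\bm{z}_*\|_2$, $L(\bm{z}_{k_0})-L(\bm{z}_*)$ and $\varphi(L(\bm{z}_{k_0})-L(\bm{z}_*))$ are jointly small. It then proves by induction that the iterates stay trapped in $B(\bm{d}_*,\epsilon)$ while accumulating the length bound.

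You instead apply the uniformized K\L{} lemma of Bolte--Sabach--Teboulle on the compact limit set $\omega$, where $L\equiv L^*$. For all large $k$ the iterates then lie in the K\L{} region simply because $\mathrm{dist}(\bm{z}_k,\omega)\to0$, so no trapping induction is needed. This is a real convenience here. The length estimate controls only $\bm{d}$-increments, and the paper's induction keeps only $\bm{d}_k$, not $\beta_k$, near the cluster point. Invoking K\L{} at $\bm{z}_k=(\bm{d}_k,\beta_k)$ in a neighbourhood of $\bm{z}_*$ therefore tacitly requires $\beta_k$ to stay close to $\beta_*$. That is true, for example by continuity of $\bm{d}_k\mapsto\lambda_{k+1}$, but the paper does not argue it; your route covers the joint variable automatically.

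You also supply several points the paper only asserts:
\begin{itemize}
\item the explicit bound $\lambda_{k+1}\le\|\bm{b}_k\|_2/(\eta r)$, which gives boundedness of $\{\bm{z}_k\}$;
\item semialgebraicity as the reason $L$ is a K\L{} function;
\item the degenerate case $L(\bm{z}_K)=L^*$, whereas the paper simply posits $L(\bm{z}_*)<L(\bm{z}_k)$ for all $k$.
\end{itemize}

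In exchange, the paper's single-point version needs the K\L{} property only at one cluster point rather than on all of $\omega$. That is immaterial here, since $L$ is a polynomial. Its trapping form is also the one that adapts to local results, such as convergence for initializations near a prescribed point.
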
\vspace*{-0.05in}
\begin{proof}
As $\{\bm{z}_{k}\}$ is bounded, there must exist an accumulation point of $\{\bm{z}_k\}$, let it be $ \bm{z}_{*}=( \bm{d}_*,  \beta_*)$, and there exists a subsequence $\{\bm{z}_{k_l}\}$ of $\{\bm{z}_{k}\}$ converging to $ \bm{z}_*$ as $l\to\infty$. Due to the boundedness from below of $L$ and the monotonicity of $\{L(\bm{z}_k)\}$, the continuous property of $L$ implies that $\{L(\bm{z}_k)\}$ converges to $L( \bm{z}_*)$ as $k\to\infty$ with $L( \bm{z}_*) < L(\bm{z}_k)$, $k\in \mathbb{N}$. In particular, $L$ satisfies the K\L~property~\cite{bolte2014proximal}, and thus
\[
L( \bm{z}_*) < L(\bm{z}_{k}) < L( \bm{z}_*) +\kappa\;\mbox{ for large }\;k\in\mathbb N\;\mbox{ and small }\kappa >0.
\]
Taking the latter into account, we define a sequence $\{b_k \}$ by
\[
b_k:= \left\| \bm{z}_k -  \bm{z}_* \right\|_2 +2\big(a^{-1} (L(\bm{z}_{k}) - L( \bm{z}_{*}))  \big)^{\frac{1}{2}} +b a^{-1} \varphi\big(L(\bm{z}_{k}) - L( \bm{z}_{*})\big)
\]
and deduce from the continuity of $\varphi$ that the origin $0\in\mathbb{R}$ is an accumulation point of $\{b_k\}$. Thus there exists $k_0 := k_{l_0}$ such that the inequalities
\begin{equation}\label{converg 4.2}
\left\| \bm{z}_{k_0} -  \bm{z}_{*} \right\|_2 + 2\big(a^{-1}(L(\bm{z}_{k_0}) - L( \bm{z}_{*}))\big)^{\frac{1}{2}} +b a^{-1}\varphi\big(L(\bm{z}_{k_0}) - L( \bm{z}_{*})\big) < \epsilon,
\end{equation}
\[
L( \bm{z}_{*}) < L(\bm{z}_{k_0}) < L( \bm{z}_{*}) + \kappa
\]
are satisfied and yield in turn the inclusions
\[
\bm{z}_{k_0} \in B( \bm{z}_{*},\epsilon) \cap \big[L( \bm{z}_{*}) < L < L( \bm{z}_{*})+\kappa\big] \subset U \cap \big[L( \bm{z}_{*}) < L < L( \bm{z}_{*}) + \kappa\big],
\]
hence $\bm{d}_{k_0} \in B( \bm{d}_{*}, \epsilon)$.

Moreover, we claim that if $L(\bm{z}_{k})<L( \bm{z}_{*}) + \kappa$ and $\bm{d}_{k}\in B( \bm{d}_{*}, \epsilon)$, then
\begin{equation}\label{eq:success_delta_sequence_bound}
2\|\bm{d}_{k+1}-\bm{d}_{k}\|_2 \leq \|\bm{d}_{k} - \bm{d}_{k-1}\|_2 + \frac{b}{a}[\varphi\big(L(\bm{z}_{k}) - L( \bm{z}_{*})\big) - \varphi\big( L(\bm{z}_{k+1}) - L( \bm{z}_{*})\big)].
\end{equation}
By Lemma~\ref{lemma:function_value_decrease_tr}, we have the inequality
\begin{equation}\label{eq:decrease}
L(\bm{z}_{k}) - L(\bm{z}_{k+1})\geq a\|\bm{d}_{k+1}-\bm{d}_{k}\|_2^2,
\end{equation}
hence $0\leq L(\bm{z}_{k_0 +1}) - L( \bm{z}_{*}) \leq L(\bm{z}_{k_0}) - L( \bm{z}_{*}) < \kappa$,
which implies therefore the inequality
\begin{equation*}
\varphi\big(L(\bm{z}_{k_0}) - L( \bm{z}_{*})\big) - \varphi\big( L(\bm{z}_{k_0+1}) - L( \bm{z}_{*})\big)\geq \varphi'\big(L(\bm{z}_{k_0}) - L( \bm{z}_{*})\big)\big( L(\bm{z}_{k_0}) -L(\bm{z}_{k_0+1})\big)
\end{equation*}
by the concavity of $\varphi$. Combining \eqref{eq:decrease} and 
\[
\varphi'(L(\bm{z}_{k_0}) - L( \bm{z}_{*}))\geq \frac{1}{\nabla L(\bm{d}_{k_0}, \beta_{k_0})}\geq \frac{1}{b\|\bm{d}_{k_0} - \bm{d}_{k_0-1}\|_2},
\]
we have
\[
\frac{b}{a}[\varphi\big(L(\bm{z}_{k_0}) - L( \bm{z}_{*})\big) - \varphi\big( L(\bm{z}_{k_0+1}) - L( \bm{z}_{*})\big)]\geq\frac{\|\bm{d}_{k_0+1}-\bm{d}_{k_0}\|_2^2}{\|\bm{d}_{k_0} - \bm{d}_{k_0-1}\|_2}.
\]
By $2\sqrt{\alpha\beta}\leq \alpha + \beta$, we get \eqref{eq:success_delta_sequence_bound} with $k=k_0$, \textit{i.e.},
\[
2\|\bm{d}_{k_0+1}-\bm{d}_{k_0}\|_2 \leq \|\bm{d}_{k_0} - \bm{d}_{k_0-1}\|_2 + \frac{b}{a}[\varphi\big(L(\bm{z}_{k_0}) - L( \bm{z}_{*})\big) - \varphi\big( L(\bm{z}_{k_0+1}) - L(\bm{z}_{*})\big)].
\]
As we have 
\begin{equation}\label{eq:inequality_induced_by_fx_decrease}
\|\bm{d}_{k_0+1} - \bm{d}_{k_0} \|_2 \leq \sqrt{\frac{L(\bm{z}_{k_0}) - L(\bm{z}_{k_0+1})}{a}}\leq \sqrt{\frac{L(\bm{z}_{k_0}) - L( \bm{z}_{*})}{a}}
\end{equation}
by \eqref{eq:decrease}, hence 
\[
\|\bm{d}_{k_0+1}- \bm{d}_{*}\|_2 \leq \|\bm{d}_{k_0}  - \bm{d}_{*}\|_2 + \|\bm{d}_{k_0+1}-\bm{d}_{k_0}  \|_2 \leq \|\bm{d}_{k_0}  - \bm{d}_{*}\|_2+\sqrt{\frac{L(\bm{z}_{k_0})-L( \bm{z}_{*})}{a}}<\epsilon,
\]
then $\bm{d}_{k_0+1} \in B( \bm{d}_{*},\epsilon)$, we get \eqref{eq:success_delta_sequence_bound} with $k=k_0+1$, \textit{i.e.},
\[
2\|\bm{d}_{k_0+2}-\bm{d}_{k_0+1}\|_2 \leq \|\bm{d}_{k_0+1} - \bm{d}_{k_0} \|_2 + \frac{b}{a}[\varphi\big(L(\bm{z}_{k_0+1}) - L( \bm{z}_{*})\big) - \varphi\big( L(\bm{z}_{k_0+2}) - L( \bm{z}_{*})\big)].
\]

Next, for $l = 1,2,\cdots$, we prove $\bm{d}_{k_0+l}\in B( \bm{d}_{*}, \epsilon)$ and 
\begin{equation}\label{eq:summary_finite}
\begin{aligned}
&\sum_{i=1}^{l}\|\bm{d}_{k_0+i+1} - \bm{d}_{k_0+i}\|_2 + \|\bm{d}_{k_0+l+1} - \bm{d}_{k_0+l}\|_2\\
\leq& \|\bm{d}_{k_0+1}-\bm{d}_{k_0} \|_2 + \frac{b}{a}[\varphi\big(L(\bm{z}_{k_0+1}) - L( \bm{z}_{*})\big) - \varphi\big( L(\bm{z}_{k_0+l+1}) - L( \bm{z}_{*})\big)]
\end{aligned}
\end{equation}
by induction on $l$. Suppose $\bm{d}_{k_0+l}\in B( \bm{d}_{*}, \epsilon)$ and \eqref{eq:summary_finite} hold for some $l\geq 1$, using the triangle inequality and \eqref{eq:summary_finite} we have
\[\begin{aligned}
&\| \bm{d}_{*} - \bm{d}_{k_0+l+1}\|_2 \leq \| \bm{d}_{*} - \bm{d}_{k_0} \|_2 + \|\bm{d}_{k_0}  - \bm{d}_{k_0+1}\|_2 + \sum_{i=1}^{l}\|\bm{d}_{k_0+i+1} - \bm{d}_{k_0+i}\|_2\\
\leq&\| \bm{d}_{*} - \bm{d}_{k_0} \|_2 + 2\|\bm{d}_{k_0}  -\bm{d}_{k_0+1}\|_2\\ 
& +\frac{b}{a}[\varphi\big(L(\bm{z}_{k_0+1}) - L( \bm{z}_{*})\big) - \varphi\big( L(\bm{z}_{k_0+l+1}) - L( \bm{z}_{*})\big)]\\
\leq&\| \bm{d}_{*} - \bm{d}_{k_0} \|_2 + 2\|\bm{d}_{k_0}  -\bm{d}_{k_0+1}\|_2+\frac{b}{a}[\varphi\big(L(\bm{z}_{k_0}) - L( \bm{z}_{*})\big)].
\end{aligned}\]
Using the above inequality, \eqref{eq:inequality_induced_by_fx_decrease} and \eqref{converg 4.2} we conclude that $\bm{d}_{k_0+l+1}\in B( \bm{d}_{*}, \epsilon)$. Hence, \eqref{eq:success_delta_sequence_bound} holds with $k=k_0+l+1$, \textit{i.e.},
\[\begin{aligned}
2\|\bm{d}_{k_0+l+2}-\bm{d}_{k_0+l+1}\|_2 \leq &\|\bm{d}_{k_0+l+1} - \bm{d}_{k_0+l}\|_2 + \\
&\frac{b}{a}[\varphi\big(L(\bm{z}_{k_0+l+1}) - L( \bm{z}_{*})\big) - \varphi\big( L(\bm{z}_{k_0+l+2}) - L( \bm{z}_{*})\big)].
\end{aligned}\]
Adding the above inequality with \eqref{eq:summary_finite}, yields \eqref{eq:summary_finite} with $l+1$, which completes the induction proof.

Direct use of \eqref{eq:summary_finite} shows that
\[
\sum_{i=k_0+1}^{k_0+l}\|\bm{d}_{i+1} - \bm{d}_{i}\|_2
\leq \|\bm{d}_{k_0+1}-\bm{d}_{k_0}  \|_2 + \frac{b}{a}(\varphi\big(L(\bm{z}_{k_0+1}) - L( \bm{z}_{*})\big)),
\]
therefore $\sum_{i=1}^{\infty}\|\bm{d}_{i+1}-\bm{d}_i\|_2 < +\infty$, 
which implies that the sequence $\{\bm{d}_{k}\}$ is a Cauchy sequence, which therefore converges to $ \bm{d}_{*}$ since it is an accumulation point of $\{\bm{d}_{k}\}$.
\end{proof}

Thus, we can prove that any limit point $\{(\bm{d}_{*},\lambda_*)\}$ of sequence $\{(\bm{d}_{k},\lambda_k)\}$ is a KKT point.
\begin{theorem}\label{thm:converge_to_kkt}
When step size condition \eqref{eq:step_cond2} is satisfied, every limit point $\{(\bm{d}_{*},\lambda_*)\}$ of the sequence $\{(\bm{d}_{k},\lambda_k)\}$ is a KKT point of problem \eqref{eq:tr_subproblem}.
\end{theorem}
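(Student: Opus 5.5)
We verify the four conditions of \eqref{eq:tr_subproblem_optimal_condition} except the last one (positive semidefiniteness, which is handled separately), i.e.\ the KKT system: stationarity $\bm{g}+(\bm{H}+\lambda_*\bm{I})\bm{d}_*=0$, feasibility, $\lambda_*\ge 0$ and complementarity. The plan is to pass to the limit in the relations already established for the iterates.

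First, the sequence $\{\lambda_k\}$ is bounded. The vector $\bm{b}_k=\bm{d}_k-\eta(\bm{g}+\bm{T}\bm{d}_k)$ is bounded because $\|\bm{d}_k\|_2\le r$. Whenever $\lambda_{k+1}>0$ we have $\phi_k(\lambda_{k+1})=r^2$, and since the diagonal entries satisfy $\bm{D}_{i,i}\ge 0$, this gives $r^2\le \|\bm{b}_k\|_2^2/(1+\eta\lambda_{k+1})^2$. Hence $\lambda_{k+1}\le (\|\bm{b}_k\|_2/r-1)/\eta$, which is uniformly bounded. Consequently limit points $(\bm{d}_*,\lambda_*)$ exist, and we take a subsequence $(\bm{d}_{k_l},\lambda_{k_l})\to(\bm{d}_*,\lambda_*)$. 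Because every $\lambda_k\ge 0$, $\|\bm{d}_k\|_2\le r$ and $\lambda_k(\|\bm{d}_k\|_2-r)=0$ by Corollary~\ref{coro:feasible_kkt}, these three relations survive in the limit by continuity. This gives $\lambda_*\ge0$, $\|\bm{d}_*\|_2\le r$ and $\lambda_*(\|\bm{d}_*\|_2-r)=0$.

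For stationarity we use the identity from the proof of Lemma~\ref{lemma:gradient_bound_tr}:
\[
\bm{g}+(\bm{H}+\lambda_{k+1}\bm{I})\bm{d}_{k+1}=\Bigl(\bm{T}-\tfrac1\eta\bm{I}\Bigr)(\bm{d}_{k+1}-\bm{d}_k),
\]
so that $\|\bm{g}+(\bm{H}+\lambda_{k+1}\bm{I})\bm{d}_{k+1}\|_2\le b\|\bm{d}_{k+1}-\bm{d}_k\|_2$. By Lemma~\ref{lemma:function_value_decrease_tr}, $f(\bm{d}_k)$ is nonincreasing and bounded below on the ball, so $\sum_k a\|\bm{d}_{k+1}-\bm{d}_k\|_2^2<\infty$ and therefore $\|\bm{d}_{k+1}-\bm{d}_k\|_2\to0$. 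Theorem~\ref{thm:cauchy_sequence} gives the same conclusion directly. Indexing the subsequence as $k_l=m+1$ and letting $l\to\infty$, we obtain $\bm{g}+(\bm{H}+\lambda_*\bm{I})\bm{d}_*=0$.

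The main subtlety is the index shift. The multiplier $\lambda_{k+1}$ is paired with $\bm{d}_{k+1}$, so the limit point must be taken along pairs $(\bm{d}_{k+1},\lambda_{k+1})$. Because $\|\bm{d}_{k+1}-\bm{d}_k\|_2\to0$, the choice between $\bm{d}_k$ and $\bm{d}_{k+1}$ in the subsequence is harmless. The boundedness of $\lambda_k$ is the only step that is not already contained in earlier results. If $r$ is close to $\|\bm{b}_k\|_2$ the explicit bound above is still finite, so no difficulty arises there.
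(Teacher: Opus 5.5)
Your proof is correct and is essentially the paper's argument. Feasibility, $\lambda_*\ge 0$ and complementarity pass to the limit from Corollary~\ref{coro:feasible_kkt}, and stationarity follows by letting $\|\bm{d}_{k+1}-\bm{d}_k\|_2\to 0$ in the identity behind Lemma~\ref{lemma:gradient_bound_tr}. Your extras tighten the argument without changing it: you prove the boundedness of $\{\lambda_k\}$ that the paper only asserts (your bound yields $\lambda_{k+1}\le\|\bm{g}\|_2/r+\|\bm{T}\|$), you correctly pair $\lambda_{k+1}$ with $\bm{d}_{k+1}$ where the paper's subsequence step glosses over the index shift, and you observe that Lemma~\ref{lemma:function_value_decrease_tr} alone gives $\|\bm{d}_{k+1}-\bm{d}_k\|_2\to 0$, without the K\L{}-based Theorem~\ref{thm:cauchy_sequence}.
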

\begin{proof}
As $\{\lambda_k\}$ is a bounded sequence, by the Bolzano-Weierstrass theorem, there exists a subsequence $\{\lambda_{k_l}\}$ such that $\lambda_{k_l}\rightarrow \lambda_*$ as $l\rightarrow \infty$. By Theorem~\ref{thm:cauchy_sequence} we know that ${\bm{d}_{k}}$ converges to $ \bm{d}_{*}$, thus $\lim\limits_{k\rightarrow\infty}\|\bm{d}_{k+1}-\bm{d}_{k}\|_2 = 0$. Taking the limits along the convergent subsequence on both sides of \eqref{eq:gradient_bound_tr} yields
\[
\lim_{l\rightarrow\infty}\|\bm{g} + \bm{D} \bm{d}_{k_l+1} + \bm{T} \bm{d}_{k_l} + \lambda_{k_l+1} \bm{d}_{k_l+1}\|_2 = 0\Longrightarrow \bm{g} + \bm{D} \bm{d}_{*} + \bm{T} \bm{d}_{*} + \lambda_* \bm{d}_* = 0,
\]
combining with Corollary~\ref{coro:feasible_kkt}, we conclude that $(\bm{d}_{*}, \lambda_*)$ is a KKT point of problem \eqref{eq:tr_subproblem}.
\end{proof} 

Finally, we establish the global convergence of Algorithm~\ref{alg:adaptive-implicit-explicit}. Denote by $\sigma_1 \leq \sigma_2 \leq \ldots \leq \sigma_{N-1}$ the eigenvalues of $\bm{H}$ in \eqref{eq:tr_subproblem}. If $\sigma_1 \geq 0$, then subproblem \eqref{eq:tr_subproblem} is a convex problem, and thus the first-order convergence (Theorem~\ref{thm:converge_to_kkt}) naturally guarantees the global convergence of the iteration. Below, we assume $\sigma_1 < 0$ and choose $\bm{\xi}$ to be the unit-norm eigenvector of the matrix $\bm{H}$ corresponding to the smallest eigenvalue $\sigma_1$ (\textit{i.e.}, $\lVert \bm{\xi} \rVert_2 = 1$). We define $w^{(1)} := \bm{\xi}^{\top} \bm{w}$, for any $\bm{w} \in \mathbb{R}^{N-1}$. The proof of global convergence relies on the following proposition.

\begin{proposition}[Proposition 2.2~\cite{carmon2020first}]\label{propos:global_convergence}
Let $g^{(1)} \neq 0$. The global minimizer $\bm{d}_{*}$ of problem \eqref{eq:tr_subproblem} is the unique stationary point satisfying the condition $d^{(1)}_* g^{(1)} \leq 0$.
\end{proposition}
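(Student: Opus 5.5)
We prove the statement by combining the optimality characterization of the TR subproblem (Corollary 7.2.2 of \cite{conn2000trust}, restated above) with an explicit look at the KKT equation in the eigenbasis of $\bm{H}$. Here a stationary point means a KKT pair $(\bm{d},\lambda)$ with $\lambda\ge 0$, $\|\bm{d}\|_2\le r$, $\lambda(\|\bm{d}\|_2-r)=0$ and $\bm{g}+(\bm{H}+\lambda\bm{I})\bm{d}=0$. The argument has two parts: (i) the global minimizer satisfies $d^{(1)}_*g^{(1)}\le 0$; (ii) any KKT point with $d^{(1)}g^{(1)}\le 0$ is a global minimizer, and that minimizer is unique.

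\textbf{Part (i).} Project the KKT equation onto $\bm{\xi}$. This gives $g^{(1)}+(\sigma_1+\lambda)d^{(1)}=0$. At the global minimizer the restated theorem gives $\bm{H}+\lambda_*\bm{I}\succeq 0$, so $\sigma_1+\lambda_*\ge 0$. If $\sigma_1+\lambda_*=0$, then $g^{(1)}=0$, which contradicts the hypothesis. Hence $\sigma_1+\lambda_*>0$ and $d^{(1)}_*=-g^{(1)}/(\sigma_1+\lambda_*)$, so $d_*^{(1)}g^{(1)}=-(g^{(1)})^2/(\sigma_1+\lambda_*)<0$. This also shows $\lambda_*>-\sigma_1>0$, so $\bm{H}+\lambda_*\bm{I}\succ0$. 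Uniqueness of the global minimizer then follows from the last sentence of the restated theorem.

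\textbf{Part (ii).} Take any KKT point $(\bm{d},\lambda)$ with $d^{(1)}g^{(1)}\le0$. The same projection gives $g^{(1)}=-(\sigma_1+\lambda)d^{(1)}$. Since $g^{(1)}\neq0$, we have $d^{(1)}\neq0$ and $\sigma_1+\lambda\neq0$. Multiplying by $d^{(1)}$ gives $d^{(1)}g^{(1)}=-(\sigma_1+\lambda)(d^{(1)})^2\le 0$, which forces $\sigma_1+\lambda>0$, i.e.\ $\bm{H}+\lambda\bm{I}\succ0$. Together with the remaining KKT conditions, the sufficiency direction of the restated theorem shows that $\bm{d}$ is a global (indeed the unique) minimizer, so $\bm{d}=\bm{d}_*$.

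\textbf{Possible difficulties.} The proof contains no real obstacle once Corollary 7.2.2 is in hand. The delicate point is excluding the degenerate ``hard case'' $\sigma_1+\lambda=0$, and $g^{(1)}\neq0$ does exactly that. A minor subtlety is that $\bm{\xi}$ is only one unit eigenvector if $\sigma_1$ is repeated. This does not matter, since the argument only uses the projection onto $\bm{\xi}$. One should also note that the variables, although written as complex, are treated as real vectors, as the paper does when writing $\bm{g}^\top\bm{d}$.
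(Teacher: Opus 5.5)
Your proof is correct and is essentially the standard argument behind this result: the paper gives no proof of its own and defers to \cite{carmon2020first}, which likewise projects the stationarity equation $\bm{g}+(\bm{H}+\lambda\bm{I})\bm{d}=0$ onto $\bm{\xi}$ to get $d^{(1)}g^{(1)}=-(\sigma_1+\lambda)(d^{(1)})^2$, uses $g^{(1)}\neq 0$ to rule out $\sigma_1+\lambda=0$, and concludes with the characterization of global minimizers in \cite{conn2000trust}, Corollary 7.2.2. One cosmetic remark: the claim $\lambda_*>-\sigma_1>0$ relies on the standing assumption $\sigma_1<0$, but you do not need it, because $\sigma_1+\lambda_*>0$ already gives $\bm{H}+\lambda_*\bm{I}\succ 0$.
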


Building upon the above proposition, we further prove that the limit point $\bm{d}_*$ from Theorem~\ref{thm:converge_to_kkt} is the global minimizer of the subproblem \eqref{eq:tr_subproblem}.

\begin{theorem}\label{thm:global_optimality}
If $g^{(1)} \neq 0$ and the initial point is chosen as $\bm{d}_0 = -r \bm{g}/\|\bm{g}\|_2$, then there exists an $\eta_{\max} > 0$ such that for any step size $\eta \in (\eta_{\min}, \eta_{\max}]$, $\bm{d}_*$ is the global minimizer of the subproblem \eqref{eq:tr_subproblem}.
\end{theorem}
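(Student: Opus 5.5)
By Proposition~\ref{propos:global_convergence}, and since Theorem~\ref{thm:converge_to_kkt} already shows that the limit $\bm{d}_*$ is a stationary (KKT) point, it suffices to prove the sign condition $d_*^{(1)} g^{(1)} \leq 0$. I will do this by showing that the whole sequence satisfies the stronger property $d_k^{(1)} g^{(1)} \leq 0$ for every $k$. Passing to the limit $k\to\infty$ then gives the claim.

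\textbf{Base case.} For $k=0$ we have $d_0^{(1)} g^{(1)} = -r (g^{(1)})^2/\|\bm{g}\|_2 < 0$.

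\textbf{Obstacle.} The inductive step is the main difficulty. The map $\bm{d}_k \mapsto \bm{d}_{k+1}$ uses the diagonal matrix $(\bm{I}+\eta\bm{L}_{k+1})^{-1}$, which is diagonal in the Fourier basis but does not commute with $\bm{\xi}\bm{\xi}^\top$. Consequently the $\bm{\xi}$-component of $\bm{d}_{k+1}$ mixes all components of $\bm{b}_k$. In the pure gradient-descent analysis of \cite{carmon2020first}, the eigen-coordinates decouple, and the update is
\[
d^{(1)}_{k+1}=\bigl(1-\eta(\sigma_1+\lambda)\bigr)d^{(1)}_k-\eta g^{(1)}
\]
(up to projection scaling), which preserves the sign. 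That decoupling is not available here.

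\textbf{Key rewriting.} I would first recast the IMEX step in the form
\[
\bm{d}_{k+1}=\bm{d}_k-\eta\bigl(\bm{g}+\bm{H}\bm{d}_{k+1}+\lambda_{k+1}\bm{d}_{k+1}\bigr)+\eta\,\bm{T}(\bm{d}_{k+1}-\bm{d}_k).
\]
Projecting onto $\bm{\xi}$ and using $\bm{\xi}^\top\bm{H}=\sigma_1\bm{\xi}^\top$ gives
\[
\bigl(1+\eta(\sigma_1+\lambda_{k+1})\bigr)d^{(1)}_{k+1}=d^{(1)}_k-\eta g^{(1)}+\eta\,\bm{\xi}^\top\bm{T}(\bm{d}_{k+1}-\bm{d}_k).
\]
With $\eta\le\eta_{\max}$ small (say $\eta_{\max}<1/(2|\sigma_1|)$), the coefficient $1+\eta(\sigma_1+\lambda_{k+1})$ is positive. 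The first two terms on the right already have the correct sign, $-\operatorname{sign}(g^{(1)})$, or are zero. The task is therefore to control the perturbation
\[
\eta\,\bm{\xi}^\top\bm{T}(\bm{d}_{k+1}-\bm{d}_k).
\]

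\textbf{Controlling the perturbation.} I would use $\|\bm{d}_{k+1}-\bm{d}_k\|_2\le \eta b'$ for some constant $b'$ depending on $\|\bm{g}\|$, $\|\bm{H}\|$ and $r$. This follows from the scheme together with the bound on $\lambda_{k+1}$ obtained from $\phi_k(\lambda)=r^2$. It makes the perturbation of size $O(\eta^2)$, while the good term $-\eta g^{(1)}$ is of size $\eta|g^{(1)}|$. Choosing $\eta_{\max}\lesssim |g^{(1)}|/(\|\bm{T}\|b')$ then ensures
\[
-\eta g^{(1)}+\eta\,\bm{\xi}^\top\bm{T}(\bm{d}_{k+1}-\bm{d}_k)
\]
has the sign of $-g^{(1)}$. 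Combined with the inductive hypothesis on $d^{(1)}_k$, this yields $d^{(1)}_{k+1}g^{(1)}\le 0$.

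\textbf{Fallback.} If this crude $O(\eta^2)$ estimate proves too delicate near the implicit $\lambda_{k+1}$ dependence, I would instead exploit monotonicity. Since $\bm{d}_{k+1}=(\bm{I}+\eta\bm{L}_{k+1})^{-1}\bm{b}_k$ and $\lambda\mapsto(1+\eta(\bm{D}_{ii}+\lambda))^{-1}$ is a positive scalar on each mode, one can compare against the $\lambda$-free step. The aim is to show that the resolvent differs from $(1+\eta\lambda_{k+1})^{-1}(\bm{I}-\eta\bm{D})$ only by $O(\eta^2)$, which recovers the decoupled recursion up to higher-order terms.

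\textbf{Conclusion.} Finally, $d^{(1)}_k g^{(1)}\le0$ for all $k$, and $\bm{d}_k\to\bm{d}_*$ by Theorem~\ref{thm:cauchy_sequence}, so $d^{(1)}_* g^{(1)}\le0$. Since $\bm{d}_*$ is a KKT point by Theorem~\ref{thm:converge_to_kkt}, Proposition~\ref{propos:global_convergence} identifies it as the global minimizer. The step size should lie in $(\eta_{\min},\eta_{\max}]$, with $\eta_{\max}$ also no larger than $1/(\|\bm{T}\|+2a)$ so that the earlier lemmas apply.
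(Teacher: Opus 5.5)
Your proof is correct and is essentially the paper's argument. Both proceed by induction on $d_k^{(1)}g^{(1)}\le 0$: project the IMEX step onto $\bm{\xi}$, then take $\eta$ small enough that the $O(\eta^2)$ coupling term cannot overturn $-\eta g^{(1)}$. The smallness needs a bound on $\lambda_{k+1}$ independent of $\eta$ and $k$, for example $\lambda_{k+1}\le\|\bm{g}\|_2/r+\|\bm{T}\|$ from $\phi_k(\lambda_{k+1})=r^2$, so your fallback is not needed.

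The only difference is where the coupling term lands. You treat $\bm{H}$ implicitly and isolate $\eta\,\bm{\xi}^\top\bm{T}(\bm{d}_{k+1}-\bm{d}_k)$, which requires $1+\eta(\sigma_1+\lambda_{k+1})>0$; this holds automatically, since $\bm{D}\succeq 0$ gives $\sigma_1\ge-\|\bm{T}\|$ and $\eta<1/\|\bm{T}\|$. The paper instead writes an explicit gradient step and isolates $\eta\,\langle\bm{\xi},\bm{D}(\bm{d}_{k+1}-\bm{d}_k)\rangle$, with the automatically positive coefficient $1-\eta\sigma_1$.
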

\begin{proof}
Theorem~\ref{thm:converge_to_kkt} establishes that $\bm{d}_{k} \to \bm{d}_{*}$ and that $\bm{d}_{*}$ is a KKT point of \eqref{eq:tr_subproblem}. By Proposition~\ref{propos:global_convergence}, we only need to prove that
\begin{equation}\label{negative_product_optimal_condition_tr}
d^{(1)}_* g^{(1)} \leq 0.
\end{equation}
We now use induction to establish that
\begin{equation}\label{Conclusions_of_the_inductive_approach_tr}
d^{(1)}_k g^{(1)} \leq 0 \quad \text{for all}\ k \geq 0,
\end{equation}
and thus taking the limit yields \eqref{negative_product_optimal_condition_tr}. By the choice of the initial point, we have
\begin{equation*}
d^{(1)}_0 g^{(1)} = -r (g^{(1)})^2 \leq 0.
\end{equation*}
Next, assuming $d^{(1)}_k g^{(1)} \leq 0$, we show $d^{(1)}_{k+1} g^{(1)} \leq 0$. We rewrite the iteration \eqref{eq:imex-scheme} as
\begin{equation}\label{semi_implict_in_global_convergence_tr}
\begin{aligned}
&[ \bm{I} + \eta ( \bm{D} + \lambda_{k+1} \bm{I} ) ] \bm{d}_{k+1} = \bm{d}_{k} - \eta ( \bm{T} \bm{d}_{k} + \bm{g} ) = \bm{d}_{k} - \eta ( ( \bm{H} - \bm{D} ) \bm{d}_{k} + \bm{g} )\\
= &( \bm{I} + \eta \bm{D} ) \bm{d}_{k} - \eta ( \bm{H} \bm{d}_{k} + \bm{g} ) = (\bm{I} + \eta \bm{D}) \bm{d}_{k} - \eta \nabla f(\bm{d}_{k})\\
= &(\bm{I} + \eta (\bm{D}+\lambda_{k+1}\bm{I})) \bm{d}_{k} - \eta (\lambda_{k+1} \bm{d}_{k} + \nabla f(\bm{d}_{k})).\\
\end{aligned}
\end{equation}
Then
\begin{equation}\label{iterate_format_in_convergence_tr}
\bm{d}_{k+1} = \bm{d}_{k} - \eta (\bm{I} + \eta (\bm{D} + \lambda_{k+1}\bm{I} ))^{-1} (\lambda_{k+1} \bm{d}_{k} + \nabla f(\bm{d}_{k})).
\end{equation}
Multiplying both sides of equation \eqref{semi_implict_in_global_convergence_tr} on the left by $\bm{\xi}$ yields
\begin{equation*}
(1 + \eta \lambda_{k+1}) d^{(1)}_{k+1} + \eta \langle \bm{\xi}, \bm{D} ( \bm{d}_{k+1} - \bm{d}_{k} ) \rangle = (1 - \eta \sigma_1 ) d^{(1)}_k - \eta g^{(1)}.
\end{equation*}
Multiplying both sides of the equation by $g^{(1)}$ yields
\begin{equation*}
(1 + \eta \lambda_{k+1} ) d^{(1)}_{k+1} g^{(1)} = (1 - \eta \sigma_1 ) d^{(1)}_k g^{(1)} - \eta \left( ( g^{(1)} )^2 + \langle \bm{\xi}, \bm{D} ( \bm{d}_{k+1} - \bm{d}_{k} ) \rangle g^{(1)} \right).
\end{equation*}
Since $d^{(1)}_k g^{(1)} \leq 0$, sufficient conditions for $d^{(1)}_{k+1} g^{(1)} \leq 0$ are
\begin{subequations}\label{sufficient_condition_for_optimal_tr}
\begin{equation}
    1 - \eta \sigma_1 \geq 0 \iff \eta \geq \frac{1}{\sigma_1}; \label{eq:sufficient_a}
\end{equation}
\begin{equation}\label{sufficient_b_tr}
( g^{(1)} )^2 + \langle \bm{\xi}, \bm{D} ( \bm{d}_{k+1} - \bm{d}_{k} ) \rangle g^{(1)} \geq 0.
\end{equation}
\end{subequations}
With $\bm{M}_k = \bm{D} [\bm{I} + \eta (\bm{D} + \lambda_{k+1} \bm{I})]^{-1}$, we analyze the conditions under which \eqref{sufficient_b_tr} holds. From the iteration \eqref{iterate_format_in_convergence_tr}, we have
\begin{equation}\label{eq19_tr}
\begin{aligned}
\| \bm{D} (\bm{d}_{k+1} - \bm{d}_{k} ) \|_2 =& \| \eta \bm{M}_k(\lambda_{k+1} \bm{d}_{k}+ \nabla f(\bm{d}_{k})) \|_2 \leq \eta \|\bm{M}_k\| \| \lambda_{k+1} \bm{d}_{k} + \nabla f(\bm{d}_{k}) \|_2\\
=& \frac{\eta \|\bm{D}\| \| \lambda_{k+1} \bm{d}_{k} + \nabla f(\bm{d}_{k}) \|_2 }{ 1 + \eta ( \|\bm{D}\| + \lambda_{k+1} ) } \leq  \frac{\eta \| \bm{D} \| (\lambda_{\max}\|\bm{d}_{k}\|_2 + \|\nabla f(\bm{d}_{k}) \|_2) }{ 1 + \eta \|\bm{D}\| }.
\end{aligned}
\end{equation}

There exists $\bar{\eta}>0$ such that, for all $\eta \leq \bar{\eta}$, we have
\begin{equation}\label{eq20_tr}
\frac{\eta \|\bm{D}\| (\lambda_{\max} \|\bm{d}_{k}\|_2 + \|\nabla f(\bm{d}_{k})\|_2) }{ 1 + \eta \|\bm{D}\| } < \frac{ |g^{(1)}| }{ 2 },\quad \text{for all}\ k \geq 0.
\end{equation}
Combining \eqref{eq19_tr}, \eqref{eq20_tr} and $\|\bm{\xi}\|_2 = 1$, we have
\begin{equation*}
\begin{aligned}
( g^{(1)} )^2 + \bm{\xi}^{\top}(\bm{D} ( \bm{d}_{k+1} - \bm{d}_{k} )) g^{(1)} \geq & ( g^{(1)} )^2 - \| \bm{\xi} \|_2 \| \bm{D} ( \bm{d}_{k+1} - \bm{d}_{k} ) \|_2 |g^{(1)}|\\
> & ( g^{(1)} )^2 - \frac{ | g^{(1)} |^2 }{ 2 } = \frac{ ( g^{(1)} )^2 }{ 2 } > 0.
\end{aligned}
\end{equation*}
Thus, combining the step size condition \eqref{eq:step_cond2} and \eqref{eq:sufficient_a}, if the step size $\eta$ satisfies the following condition
\begin{equation*}
\max\left\{\frac{1}{\sigma_1}, \eta_{\min}\right\} = \eta_{\min} \leq \eta \leq \eta_{\max} := \min \left\{ \frac{1}{\|\bm{T}\|+2a}, \bar{\eta} \right\},
\end{equation*}
it is guaranteed that $d^{(1)}_{k+1} g^{(1)} \leq 0$. Thus, by induction, \eqref{Conclusions_of_the_inductive_approach_tr}  holds. Let $k \to \infty$ in \eqref{Conclusions_of_the_inductive_approach_tr} and using Proposition~\ref{propos:global_convergence}, we conclude that $\bm{d}_{*}$ must be a global minimizer of problem \eqref{eq:tr_subproblem}.
\end{proof}

\begin{remark}\label{rmk:hard_case_strategy}
The assumption $g^{(1)}\neq 0$ excludes the exact hard case of the TR subproblem, where the gradient is orthogonal to the eigenvector corresponding to the smallest eigenvalue of $\bm{H}$, so that the induction argument in Theorem~\ref{thm:global_optimality} does not apply when $g^{(1)}=0$. In our implementation, whenever the TR subproblem is in the (nearly) hard case, we simply replace the deterministic initial point $\bm{d}_0=-r\bm{g}/\|\bm{g}\|_2$ in Algorithm~\ref{alg:adaptive-implicit-explicit} by a feasible point drawn from a continuous distribution whose support is $\{\bm{s}\mid\|\bm{s}\|_2\leq r\}$. As will be demonstrated in Subsection~\ref{subsec:beyond_theoretical_scope}, the IMEX-TR method with this randomized initialization still converges to SP-IIs in our numerical experiments.
\end{remark}

\section{Numerical Experiments}\label{sec:numerical}

This section presents a comprehensive numerical evaluation of the proposed IMEX-TR method. To systematically compare the proposed method with several representative first-order schemes, including gradient flow methods and a Bregman proximal gradient algorithm, we demonstrate that IMEX-TR consistently converges to SP-IIs, whereas conventional methods often stagnate at unstable SP-Is. And we show that IMEX-TR exhibits robustness to initialization and is capable of escaping saddle points to explore complex energy landscapes. Furthermore, we demonstrate that when the TR subproblem is in the nearly hard case, the IMEX-TR method can still converge to SP-IIs when the subproblem solver uses a random initial point.

Parameters used in Algorithm~\ref{alg:unified_imex_tr} are set to standard values in TR literature to balance convergence speed and numerical stability, $\gamma_C = 0.5$, $\gamma_E = 2$, $\gamma_\lambda = 1.5$, $\theta = 10^{-4}$, $\underline{\mu}=1$, $\bar{\mu}=10^{5}$, $\mu_0 = 1$, $r_0=1$, $\nu_0=5$ and $\varepsilon=10^{-10}$. In Algorithm~\ref{alg:adaptive-implicit-explicit}, the step size $\eta = 0.1$ is chosen empirically to ensure efficient inner-loop convergence with a tolerance $\varepsilon_{\text{sub}}=10^{-13}$.

\subsection{Convergence}
\label{subsec:sp2_convergence}
In this subsection, we compare the IMEX-TR method with representative methods, including the adaptive accelerated Bregman proximal gradient (AA-BPG)~\cite{jiang2020efficient}, and several gradient flow methods, including the semi-implicit scheme (SIS), first-order stabilized semi-implicit scheme (SSIS1), second-order stabilized semi-implicit scheme (SSIS2)~\cite{shen2010numerical}, the invariant energy quadratization (IEQ)~\cite{yang2016linear}, the scalar auxiliary variable (SAV)~\cite{shen2019new}, and exponential time differencing (ETD) methods~\cite{fu2022energy}.\footnote[1]{Since the results of SIS, SSIS1, and IEQ are similar to those of other methods except IMEX-TR, the corresponding results are not reported.} All algorithms start from the same initial condition to demonstrate that the IMEX-TR method, by effectively using Hessian information, converges to SP-IIs, while other methods may converge to SP-Is.

\begin{table}[htbp]
    \centering
    \caption{Model parameters, computational domains, and initial configurations for three numerical examples.}
    \label{tab:example_params}
    \begin{tabular}{c|c|c|c}
    \hline
    Example & Parameters $(\tau, \gamma)$ & Domain & Initial Phase \\
    \hline
    1 & $(-0.35, 0.7)$ & $[0, 2\sqrt{2}\pi]^3$ & LAM \\
    2 & $(-0.001, 0.4)$ & $[0, 2\sqrt{2}\pi]^3$ & LAM \\
    3 & $(-0.28, 0.32)$ & $[0, 2\sqrt{6}\pi]^3$ & HEX \\
    \hline
    \end{tabular}
\end{table}

We illustrate the performance of IMEX-TR through three different examples. Table~\ref{tab:example_params} summarizes the model parameters $(\tau, \gamma)$, computational domains, and initial configurations employed in the three numerical experiments. The initial ordered configurations---specifically, three-dimensional lamellar (LAM) and hexagonal (HEX) phases---are constructed via Fourier series expansion of the form~\cite{jiang2013discovery}
\[
  \psi(\bm{x}) = \sum_{\bm{k}\in\Xi }\hat{\psi}(\bm{k})e^{i(\bm{B}\bm{k})^{\top}\bm{x} },
\]
where the reciprocal lattice point sets $\Xi$, Fourier coefficients $\hat{\psi}(\bm{k})$, and reciprocal lattices $\bm{B}$ are detailed in Table~\ref{tab:initial_value}.

\begin{table}[htbp]
    \centering
    \caption{Initial reciprocal lattice point sets $\Xi$, nonzero Fourier coefficients $\hat{\psi}(\bm{k})$, and reciprocal lattice matrices $\bm{B}$ for the LAM and HEX phases.}
    \label{tab:initial_value}
    \begin{tabular}{c|c|c|c}
    \hline
    Phase & $\Xi$ & $\hat{\psi}(\bm{k})$ & $\bm{B}$ \\
    \hline
    LAM & $\{(1,0,0),(-1,0,0)\}$ & $0.3$ & $\frac{1}{\sqrt{2}}\bm{I}$ \\
    \hline
    HEX & \makecell{$\{(0,1,-1),(0,-1,1),(1,0,-1),$\\ $(-1,0,1),(1,-1,0),(-1,1,0)\}$} & $0.3$ & $\frac{1}{\sqrt{6}}\bm{I}$ \\
    \hline
\end{tabular}
\end{table}

\begin{figure}[htbp]
  \centering
  \includegraphics[width=.9\linewidth]{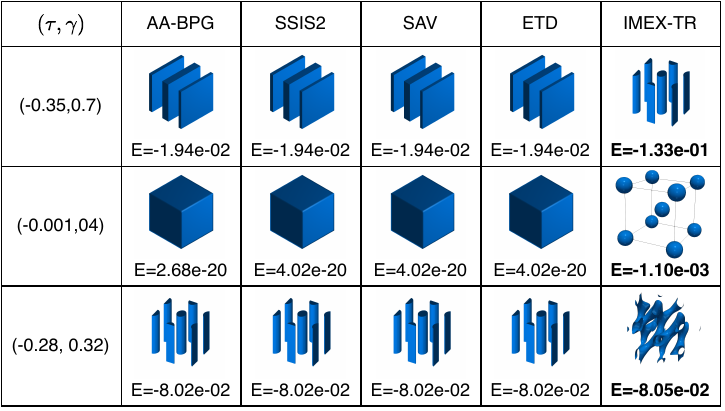}
  \caption{Final physical states and energy values obtained by different methods. Across the three examples, IMEX-TR converges to SP-IIs, including HEX, BCC, and a metastable network, consistently achieving lower energy states compared to standard first-order schemes}
  \label{fig:diff_method_phases}
\end{figure}

\begin{figure}[htbp]
  \centering
  \includegraphics[width=.8\linewidth]{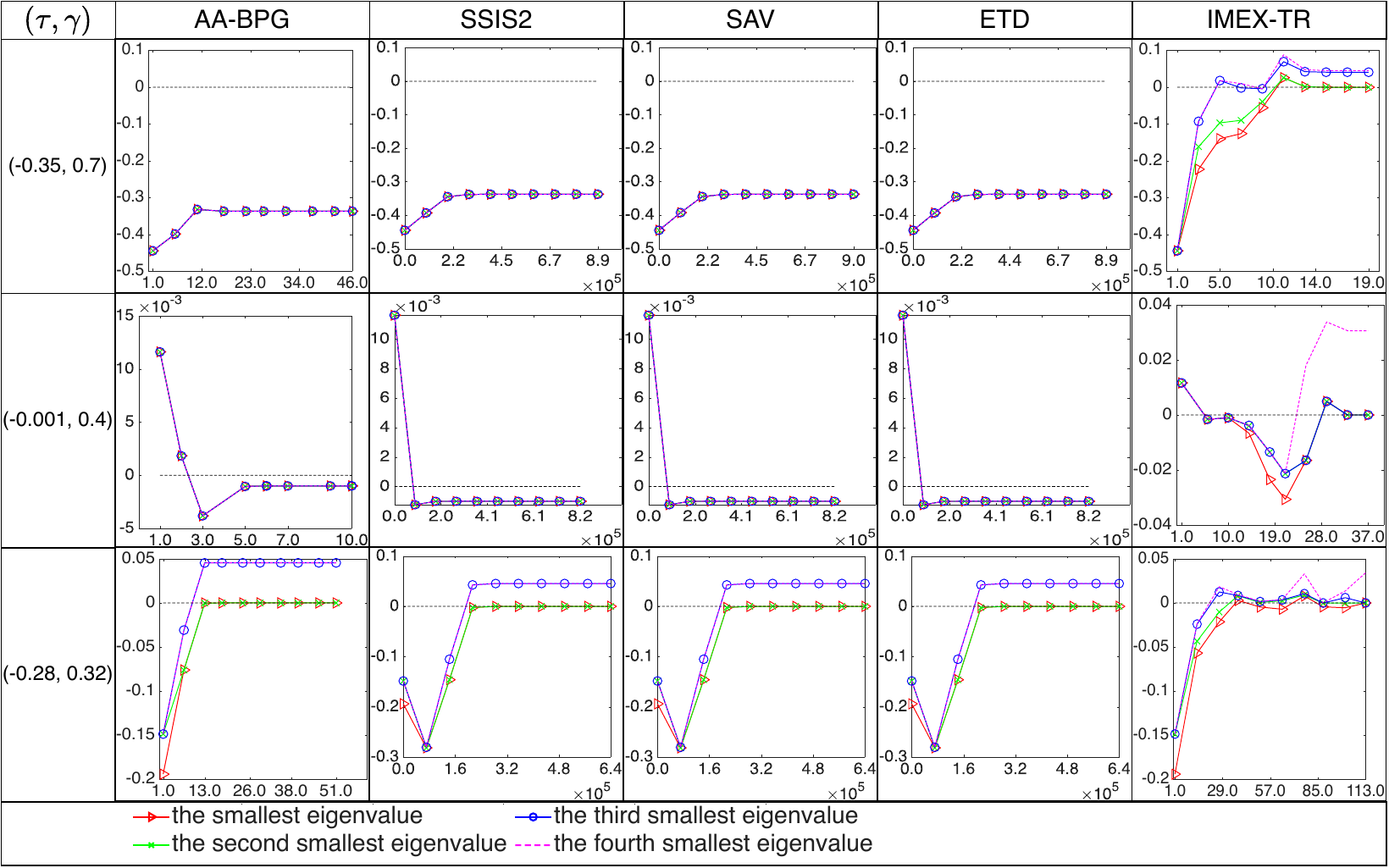}
  \caption{Evolution of the four smallest eigenvalues of the Hessian matrix. The dashed line indicates the zero threshold. IMEX-TR consistently reaches the SP-II region (non-negative eigenvalues), whereas first-order methods often stagnate at SDPs with negative curvature.}
  \label{fig:diff_method_eig}
\end{figure}

Figure~\ref{fig:diff_method_phases} presents the converged stationary points and their corresponding energy values. In Examples 1 and 2, conventional first-order schemes fail to escape the initial energy basins and stagnate at unstable stationary points. Specifically, in Example 1, first-order methods remain trapped in the LAM state, while in Example 2, they converge to a disordered state (\textit{i.e.}, $\phi(\bm{x})\approx 0$). In contrast, the IMEX-TR method successfully converges to SP-IIs, namely the HEX and body-centered cubic (BCC) phases, both exhibiting lower free energies. In Example 3, all methods reach SP-IIs; however, IMEX-TR moves away from the HEX basin and converges to a lower-energy network SP-II with energy $E=-8.05\times 10^{-2}$, compared with the HEX phase energy $E=-8.02\times 10^{-2}$.

To further verify the stability of these stationary points, we perform an eigenvalue analysis of the Hessian matrix. Figure~\ref{fig:diff_method_eig} illustrates the evolution of the four smallest eigenvalues throughout the iterative process. For the first-order methods in Examples 1 and 2, the persistence of significant negative eigenvalues confirms that the states reached are SDPs. Conversely, the IMEX-TR method converges to a point where the Hessian eigenvalues are non-negative, which satisfies the SP-II condition.

\subsection{Robustness to initial conditions}

\begin{figure}[htbp]
  \centering
  \includegraphics[width=.8\linewidth]{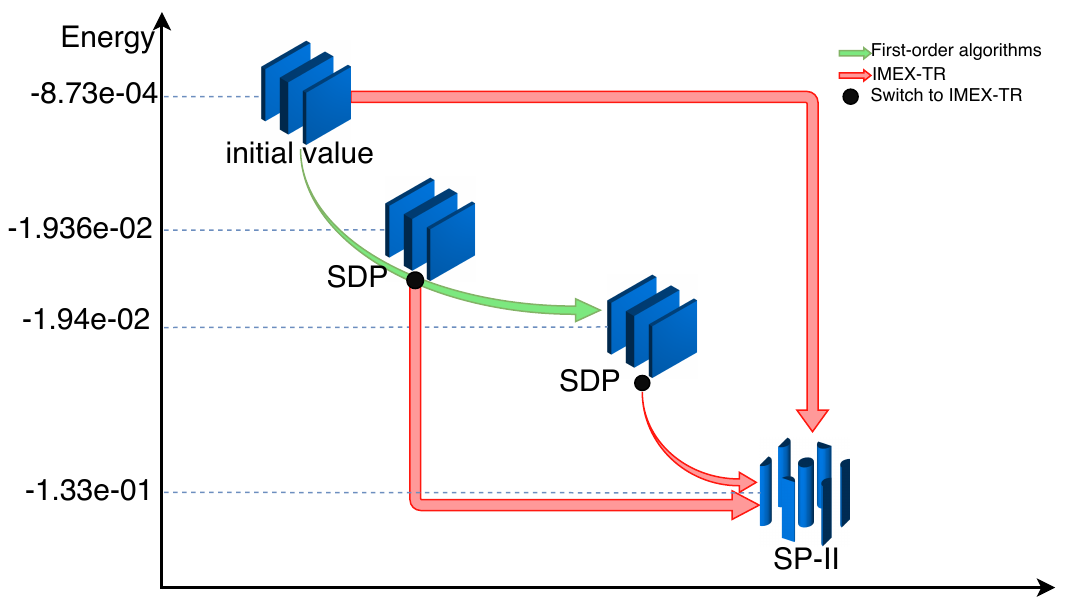}
  \caption{Trajectories of IMEX-TR activated at different stages of first-order methods. Whether started from the initial point, an intermediate state, or the converged saddle point, IMEX-TR consistently converges to the same (meta-)stable SP-II (HEX phase), demonstrating its efficacy in exploiting negative curvature directions to escape unstable stationary points.}
  \label{fig:mix_apg_tr_lam2hex}
\end{figure}

We further evaluate the robustness of IMEX-TR by activating it at distinct stages along the trajectory of first-order methods. As Figure~\ref{fig:mix_apg_tr_lam2hex} illustrates, IMEX-TR launches on three representative states: (i) the initial configuration, (ii) an intermediate state, and (iii) the converged SDP stage attained by first-order solvers. The behavior of IMEX-TR remains similar regardless of the specific first-order method used.

These results show that IMEX-TR consistently converges to an SP-II, a fundamental advantage arising from its use of Hessian information to capture the curvature of the energy landscape. This capability allows the algorithm to identify and exploit descent directions associated with negative curvature, thereby enabling it to divert the optimization trajectory away from SP-Is during the early stages or to escape them after stagnation.
Consequently, the IMEX-TR reliably avoids SP-Is and locates an SP-II, \textit{i.e.}, the physical (meta-)stable phase, regardless of whether it is deployed from the beginning or as a post-processing step for other solvers.

\subsection{Beyond the theoretical scope}\label{subsec:beyond_theoretical_scope}

In practical computations, the TR subproblem may become nearly hard, i.e., $|g^{(1)}|$ is extremely small, so that the assumption $g^{(1)}\neq 0$ of Theorem~\ref{thm:global_optimality} is only marginally satisfied. As described in Remark~\ref{rmk:hard_case_strategy}, whenever such subproblems are encountered, we initialize the inner solver with a feasible random value. In this subsection, we verify numerically that IMEX-TR equipped with this randomized initialization strategy still converges to SP-IIs.

For $(\tau,\gamma)=(-0.09,0.34)$, with the computational domain, initial LAM configuration, and all other settings chosen as in Example~1 of Table~\ref{tab:example_params}, the algorithm encounters nearly hard TR subproblems satisfying $|g^{(1)}|\leq 10^{-12}$.
To isolate the effect of the inner initialization, we keep all algorithmic settings unchanged and compare the deterministic initial point used in Theorem~\ref{thm:global_optimality} with a feasible random value generated from an continuous distribution whos support is $\{\bm{s}\mid\|\bm{s}\|_2\leq r\}$, where $r$ is the trust region radius.
As shown in Figure~\ref{fig:beyond_theoretical_scope_left}, the deterministic initialization leads to a BCC phase with energy $E=-1.03\times10^{-2}$.
Although the displayed Hessian eigenvalues change sign during the iteration, several of them eventually remain negative.
The Hessian is therefore not positive semidefinite, and the final state is not an SP-II. In Figure~\ref{fig:beyond_theoretical_scope_right}, randomized initialization leads instead to a HEX phase with the lower energy $E=-1.32\times10^{-2}$. At termination, all the displayed smallest Hessian eigenvalues are nonnegative. The final state therefore satisfies the SP-II condition.
Since the two runs differ only in the inner initialization used for the nearly hard subproblems, this experiment confirms that the randomized initialization strategy enables IMEX-TR to converge to an SP-II even when nearly hard subproblems are encountered, in line with the findings in~\cite{beck2018globally} regarding random initialization.

\begin{figure}[htbp]
    \subfigure[]{
        \includegraphics[width=.45\linewidth]{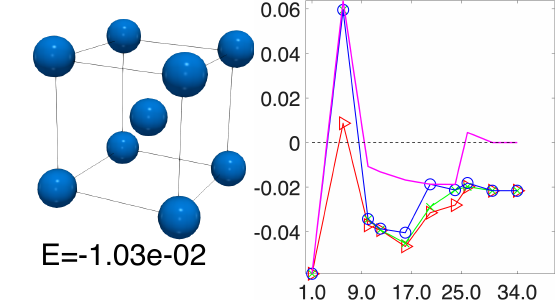}
        \label{fig:beyond_theoretical_scope_left}
    }
    \subfigure[]{
        \includegraphics[width=.45\linewidth]{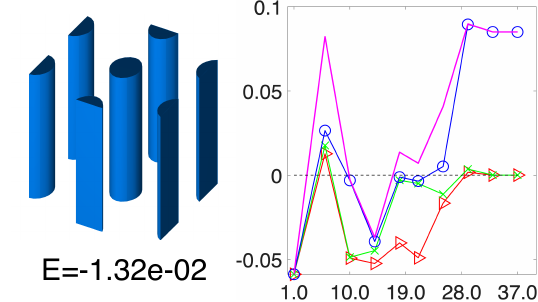}
        \label{fig:beyond_theoretical_scope_right}
    }
  \caption{Numerical behavior of IMEX-TR for the same example beyond the deterministic theoretical scope, with parameters $(\tau,\gamma)=(-0.09,0.34)$ and other settings as in Example~1 of Table~\ref{tab:example_params}. The only difference between (a) and (b) is the initial point used by the inner solver in the nearly hard case subproblems ($|g^{(1)}|\leq 10^{-12}$). In (a), the deterministic initial point from Theorem~\ref{thm:global_optimality} leads to a final state that is not an SP-II, whereas in (b), a randomized initial point leads to convergence to an SP-II.}
  \label{fig:beyond_theoretical_scope}
\end{figure}

\section{Conclusion}\label{sec:conclusion}

We have demonstrated that a systematic focus on SP-IIs is essential for reliably computing stable and metastable states of the LB model.
A new algorithm, IMEX-TR, was developed to compute such points with $\mathcal{O}(N\log N)$ complexity per iteration for the subproblem, exploiting the diagonal structure of the Hessian in reciprocal and physical spaces.
The method is provably convergent to SP-IIs under the stated assumptions and consistently avoids unstable stationary points in the numerical experiments, including cases where first-order methods stagnate at saddles or disordered states.
Finally, although the IMEX-TR focused on the LB model, the central idea of targeting SP-IIs is directly applicable to other Landau-type models of similar structure and various spatial discretization schemes.

\section*{Acknowledgments}

\section*{Funding}
The first author was supported by the NSFC (12271291) and New Cornerstone Investigator Program (100001127). The third author was supported by the National Key R\&D Program of China (2023YFA1008802), the NSFC (12288101), the Science and Technology Innovation Program of Hunan Province (2024RC1052). The fourth author was supported by the National Key R\&D Program of China (2023YFB3001604). This work is also partially supported by the High Performance Computing Platform of Xiangtan University.

\bibliographystyle{plain}
\bibliography{refs}


\end{document}